\DeclareSymbolFontAlphabet{\mathbb}{AMSb}
\DeclareSymbolFontAlphabet{\mathbbl}{bbold}
\newtheorem{thm}{Theorem}[section]
 \newtheorem{lem}[thm]{Lemma}
 \newtheorem{prop}[thm]{Proposition}
\theoremstyle{definition}
 \theoremstyle{remark}
\numberwithin{equation}{section}
\begin{document}


\title[]
 {Discrete Hardy spaces and heat semigroup associated with the discrete Laplacian}

\author[V. Almeida]{V. Almeida}
\address{V\'ictor Almeida, Jorge J. Betancor, Lourdes Rodr\'iguez-Mesa\newline
	Departamento de An\'alisis Matem\'atico, Universidad de La Laguna,\newline
	Campus de Anchieta, Avda. Astrof\'isico S\'anchez, s/n,\newline
	38721 La Laguna (Sta. Cruz de Tenerife), Spain}
\email{valmeida@ull.es,
jbetanco@ull.es, 
lrguez@ull.es}

\author[J. J. Betancor]{J. J. Betancor}



\author[L. Rodr\'{\i}guez-Mesa] {L. Rodr\'{\i}guez-Mesa}

\thanks{The authors are partially supported by MTM2016-79436-P}

\subjclass[2010]{42B30, 42B25, 42B15}

\keywords{Discrete Hardy spaces, discrete Laplacian, maximal operator, Littlewood-Paley g function, spectral multiplier.}

\date{}


\begin{abstract}
In this paper we study the behavior of some harmonic analysis operators associated with the discrete Laplacian $\Delta_d$ in discrete Hardy spaces $\mathcal H^p(\mathbb Z)$. We prove that the maximal operator and the Littlewood-Paley $g$ function defined by the semigroup generated by $\Delta_d$ are bounded from $\mathcal H^p(\mathbb Z)$ into $\ell^p(\mathbb Z)$, $0<p\leq 1$. Also, we establish that every $\Delta_d$-spectral multiplier of Laplace transform type is a bounded operator from $\mathcal H^p(\mathbb Z)$ into itself, for every $0<p\leq 1$.
\end{abstract}

\maketitle

\section{Introduction}

Coifman and Weiss (\cite{CW}) defined Hardy spaces $H^p(X)$ when the underlying space $X$ is a space of homogeneous type. They extended the atomic decomposition theory for the classical  Hardy spaces to this more general setting. The set $\mathbb Z$ of integer numbers endowed with the usual distance defined by the absolute value and the counting measure $\mu$ is a space of homogeneous type. Discrete Hardy spaces were mentioned in  \cite[p. 622]{CW} as an example of the general theory in  \cite{CW}.

Boza and Carro (\cite{BC1}, \cite{BC3} and \cite{BC2}) characterized Hardy spaces $H^p(\mathbb Z)$, $0<p\leq 1$, by using some maximal operators associated with a discretization of the classical Poisson integrals and approximations of the identity. Also, they described discrete Hardy spaces by using atoms and the discrete Hilbert transform. Boza and Carro established and took advantage of connections between discrete and continuous (classical) settings. Kanjin and Satake (\cite{KS}) developed molecular characterizations of discrete Hardy spaces. Komori (\cite {Ko}) proved that every molecule can be decomposed in atoms without using the properties of the classical Hardy spaces $H^p(\mathbb R)$. A discrete version for $H^p(\mathbb Z)$ of the weak factorization results for Hardy spaces $H^p(\mathbb R)$ due to Coifman, Rochberg and Weiss (\cite{CRW}) and Miyachi (\cite{Mi}), was proved by Boza (\cite{Bo}). Eoff (\cite[Theorem 1]{Eo}) established that, for every $0<p\leq 1$, the discrete Hardy space $H^p(\mathbb Z)$ is isomorphic to the Paley-Wiener space $E^p$ that consists of all those entire functions of exponential type $\pi$ such that $\int_{\mathbb R}|f(x)|^pdx <\infty$. Chen and Fang (\cite{CF}) extended Eoff's result to higher dimensions when $p=1$. 

The discrete Laplacian $\Delta_d$ on $\mathbb Z$ is defined by.
$$(\Delta_d f)(n)=-f(n+1)+2f(n)-f(n-1),\;\;\;\;n\in\mathbb Z,$$
where $f$ is a complex function defined on $\mathbb Z$. For every $0<p\leq\infty$, we denote by $\ell^p(\mathbb Z)$ the usual Lebesgue space on $\mathbb Z$ with respect to the counting measure $\mu$. The operator $\Delta_d$ is bounded from $\ell^p(\mathbb Z)$ into itself, for every $0<p\leq\infty$, and it is a nonnegative operator in $\ell^2(\mathbb Z)$.

We define the function
$$G(n,t)=e^{-2t}I_n(2t),\quad n\in\mathbb Z\;\mbox{and}\; t>0.$$
Here, for every $n\in\mathbb Z$, $I_n$ represents the modified Bessel function of the first kind and order $n$. The main properties of $I_n$ can be encountered in \cite[Chapter 5]{Le}.

For every $t>0$ we consider the convolution operator $W_t$ defined by
$$W_t(f)(n)=\sum_{m\in\mathbb Z}G(n-m,t)f(m),\quad n\in\mathbb Z,$$
for every $f\in\ell^p(\mathbb Z)$, $1\leq p\leq\infty$. In \cite[Proposition 1]{CGRTV} it was proved that the uniparametric family $\{W_t\}_{t>0}$ is a positive Markovian diffusion semigroup in the Stein's sense (\cite{StLP}) in $\ell^p(\mathbb Z)$, $1\leq p\leq\infty$. Moreover, $W_t=e^{-t\Delta_d}$, $t>0$, that is, $-\Delta_d$ is the infinitesimal generator of $\{W_t\}_{t>0}$.

We now define the operators that we will study on Hardy spaces $H^p(\mathbb Z)$, $0<p\leq 1$.
The maximal operator $W_*$ associated with $\{W_t\}_{t>0}$ is defined by
$$W_*(f)=\sup_{t>0}|W_t(f)|.$$
The (vertical) Littlewood-Paley $g$-function for $\{W_t\}_{t>0}$ is given by
\begin{equation}\label{funciong}
g(f)(n)=\left(\int_0^\infty |t\partial_tW_t(f)(n)|^2\frac{dt}{t}\right)^{1/2},\;\;\;\;n\in\mathbb Z,
\end{equation}
for every $f\in\ell^p(\mathbb Z)$, $1\leq p<\infty$. If $f\in\ell^1(\mathbb Z)$ the Fourier transform $\mathcal F_\mathbb{Z}(f)$ of $f$ is defined by
$$\mathcal F_\mathbb{Z}(f)(\theta)=\sum_{n\in\mathbb Z}f(n)e^{in\theta},\quad \theta\in(-\pi,\pi).$$
The Fourier transform $\mathcal F_\mathbb{Z}(f)$ can be extended from $\ell^1(\mathbb Z)$ to $\ell^2(\mathbb Z)$ as an isometry from $\ell^2(\mathbb Z)$ into $L^2(-\pi,\pi)$. Moreover, the inverse operator $\mathcal F_\mathbb{Z}^{-1}$  of $\mathcal F_\mathbb{Z}$ is defined by
$$\mathcal F_\mathbb{Z}^{-1}(\varphi)(n)=\frac{1}{2\pi}\int_{-\pi}^{\pi}\varphi(\theta)e^{-in\theta}d\theta,\;\;\;\;\varphi\in L^2(-\pi,\pi).$$
It is no hard to see that, for every $\theta\in(-\pi,\pi)$, $\Delta_d(e_\theta)=2(1-\cos \theta)e_\theta$, where $e_\theta(n)=e^{in\theta}$, $n\in\mathbb Z$. Also, for every $f\in\ell^2(\mathbb Z)$, 
$$\mathcal F_\mathbb{Z}(\Delta_d(f))(\theta)=2(1-\cos \theta)\mathcal F_\mathbb{Z}(f)(\theta),\quad \theta\in(-\pi,\pi).$$
If $\mathfrak m\in L^\infty(0,\infty)$ the spectral multiplier $T_{\mathfrak m}$ associated with $\Delta_d$ defined by $\mathfrak m$ takes the following form
$$T_{\mathfrak m} f=\mathcal F_\mathbb{Z}^{-1}[\mathfrak m(2(1-\cos \theta))\mathcal F_\mathbb{Z}(f)],\quad f\in\ell^2(\mathbb Z).$$
Thus, $T_{\mathfrak m}$ is bounded from $\ell^2(\mathbb Z)$ into itself.

We now assume that $\mathfrak m\in L^\infty(0,\infty)$ and that there exists $\Psi\in L^\infty(0,\infty)$ such that 
$$
\mathfrak m(\lambda)=\lambda\int_0^\infty e^{-\lambda t}\Psi(t)dt,\quad \lambda\in(0,\infty).
$$
The multiplier $T_{\mathfrak m}$ is now called Laplace transform type multiplier (see \cite{StLP}).

According to \cite{StLP} the operators $W_*$, $g$ and $T_{\mathfrak m}$ are bounded from $\ell^p(\mathbb Z)$ into itself for every $1<p<\infty$. In \cite{CGRTV} it was proved by using Calder\'on-Zygmund theory for Banach valued singular integrals that $W_*$ and $g$ are bounded operators from  $\ell^p(\mathbb Z,\omega)$ into itself, for every $1<p<\infty$ and $\omega\in A_p(\mathbb Z)$, and from $\ell^1(\mathbb Z,\omega)$ into $\ell^{1,\infty}(\mathbb Z,\omega)$, for every  $\omega\in A_1(\mathbb Z)$. Here if $\omega$ is a nonnegative sequence and $1\leq p<\infty$, we say that a complex sequence $f\in\ell^p(\mathbb Z,\omega)$ when
$$\|f\|_{p;\omega}:=\left(\sum_{n\in\mathbb Z}|f(n)|^p\omega(n)\right)^{1/p}<\infty.$$
On $\ell^p(\mathbb Z,\omega)$ we consider the norm $\|\cdot\|_{p;\omega}$. Also, if $\omega$ is a nonnegative sequence, we say that a complex sequence $f\in\ell^{1,\infty}(\mathbb Z,\omega)$ when
$$\|f\|_{1,\infty;\omega}:=\sup_{\lambda >0}\lambda\omega\left(\{n\in\mathbb Z\;:\;|f(n)|>\lambda\}\right)<\infty,$$
where $\omega(E)=\sum_{n\in\mathbb Z}W(n)$, for every $E\subset\mathbb Z$.
On $\ell^{1,\infty}(\mathbb Z,\omega)$ we consider the quasinorm $\|\cdot\|_{1,\infty;\omega}$.

For every $1\leq p<\infty$, by $A_p(\mathbb Z)$ we denote the Muckenhoupt class of weights. A nonnegative sequence $\omega\in A_p(\mathbb Z)$ (\cite[\S 8]{HMW}) when there exists $C>0$ such that, for every $M,N\in\mathbb Z$, $M\leq N$,
$$\left(\sum_{k=M}^N\omega(k)\right)\left(\sum_{k=M}^N\omega(k)^{-1/(p-1)}\right)^{p-1}\leq C(N-M+1)^p,\;\;\;\;1<p<\infty,$$
and
$$\left(\sum_{k=M}^N\omega(k)\right)\sup_{M\leq k\leq N}\omega(k)^{-1}\leq C(N-M+1),\;\;\;\;p=1.$$

The discrete Hilbert transform $H_d$ is defined by
$$H_d(f)(n)=\sum_{m\in\mathbb Z}\frac{f(m)}{n-m+1/2},$$
when $f\in C_0(\mathbb Z)$, where $ C_0(\mathbb Z)$ consists of all those complex sequences $f$ such that $f(n)=0$ when $|n|\geq m$ for certain $m\in\mathbb N$. It is well-known that if $1<p<\infty$, $H_d$ can be extended to $\ell^p(\mathbb Z,\omega)$ as a bounded operator from $\ell^p(\mathbb Z,\omega)$ into $\ell^p(\mathbb Z,\omega)$ if and only if $\omega\in A_p(\mathbb Z)$, and $H_d$ can be extended to $\ell^1(\mathbb Z,\omega)$ as a bounded operator from $\ell^1(\mathbb Z,\omega)$ into $\ell^{1,\infty}(\mathbb Z,\omega)$ if and only if $\omega\in A_1(\mathbb Z)$ (\cite{HMW}). In \cite[\S 6]{CGRTV} it is proved that $H_d$ can be seen as a Riesz transform in the sense os Stein (\cite{StLP}) associated to the discrete Laplacian.

In this paper we prove that the maximal operator $W_*$ and the Littlewood-Paley function $g$ are bounded from the Hardy space $H^p(\mathbb Z)$ into $\ell^p(\mathbb Z)$, and the Laplace transform type multiplier $T_{\mathfrak m}$ is bounded from $H^p(\mathbb Z)$ into itself, for every $0<p\leq 1$.

As it was mentioned Hardy spaces $H^p(\mathbb Z)$ were considered in \cite[p. 622]{CW} as an special case of the general theory developed in \cite{CW}. Boza and Carro (\cite{BC1} and \cite{BC2}) characterized Hardy spaces $H^p(\mathbb Z)$ in different ways.

We recall some definitions and properties of the discrete Hardy spaces. Let $\alpha >0$. The sequence space $\mathcal L_\alpha$ consists of all those complex sequences $\mathfrak a$ such that 
$$\|\mathfrak a\|_{\mathcal L_\alpha}:= \sup_{n,m\in\mathbb Z,\;n\neq m}\frac{|\mathfrak a(n)-\mathfrak a(m)|}{|n-m|^\alpha}<\infty.$$
$\mathcal L_\alpha$ is endowed with the quasinorm $\|\cdot\|_{\mathcal L_\alpha}$.

Let $p,q\in(0,\infty)$ such that $0<p<q$ and $p\leq 1\leq q\leq\infty$. A complex sequence $\mathfrak b$ is said to be a $(p,q)$-atom when there exist $n_0\in\mathbb Z$ and $r_0\geq 1$ satisfying  that
\begin{enumerate}
\item[(i)] The support of $\mathfrak b$ is contained in the ball $B_{\mathbb Z}(n_0,r_0)$;
\item[(ii)] $\|\mathfrak b\|_q\leq \mu\left(B_{\mathbb Z}(n_0,r_0)\right)^{1/q-1/p}$, where $1/q$ is understood to be $0$ when $q=\infty$;
\item[(iii)] $\sum_{n\in\mathbb Z}\mathfrak b(n)=0$.
\end{enumerate}
Suppose that $0<p<1$. If $\mathfrak b$ is a $(p,q)$-atom, then $\mathfrak b$ defines an element $L_{\mathfrak b}$ of the dual space $(\mathcal L_{1/p-1})'$ of $\mathcal L_{1/p-1}$ as follows
$$\langle L_{\mathfrak b},\mathfrak a\rangle=\sum_{n\in\mathbb Z}\mathfrak a(b)\mathfrak b(n),\;\;\;\;\mathfrak a\in\mathcal L_{1/p-1}.$$

Let $q\geq 1$. The space $H^{p,q}(\mathbb Z)$ is the subspace of the dual space $(\mathcal L_{1/p-1})'$  of $\mathcal L_{1/p-1}$ consisting of those linear functionals $h$ defined in $\mathcal L_{1/p-1}$ such that there exist, for every $j\in\mathbb N$, a $(p,q)$-atom $\mathfrak b_j$ and $\lambda_j>0$ satisfying that $\sum_{j\in \mathbb{N}} \lambda_j^p<\infty$ and $h=\sum_{j\in \mathbb{N}}\lambda_j\mathfrak b_j$, where the series converges in $(\mathcal L_{1/p-1})'.$ For every $h\in H^{p,q}(\mathbb Z)$ we define $\|h\|_{H^{p,q}(\mathbb Z)}$ as follows
$$\|h\|_{H^{p,q}(\mathbb Z)}=\inf\Big(\sum_{j\in \mathbb{N}}\lambda_j^p\Big)^{1/p},$$
where the infimum is taken over all those sequences $(\lambda_j)_{j\in \mathbb{N}}$ of nonnegative real numbers such that $\sum_{j\in \mathbb{N}} \lambda_j^p<\infty$ and $h=\sum_{j\in \mathbb{N}}\lambda_j\mathfrak b_j$ in $(\mathcal L_{1/p-1})'$, where $\mathfrak b_j$ is a $(p,q)$-atom, for every $j\in\mathbb N$.

Suppose that $h=\sum_{j\in \mathbb{N}} \lambda_j\mathfrak b_j$ in the sense of convergence in $(\mathcal L_{1/p-1})'$, where, for every $j\in\mathbb N$, $\lambda_j>0$ and $\mathfrak b_j$ is a $(p,q)$-atom, and $\sum_{j\in \mathbb{N}}\lambda_j^p<\infty$. By using H\"older inequality we can see that $\|\mathfrak b_j\|_p\leq 1$, $j\in\mathbb N$. Then, for every $n,m\in\mathbb N$, $n<m$,
$$\Big\|\sum_{j=n}^m \lambda_j\mathfrak b_j\Big\|_p^p\leq \sum_{j=n}^m \lambda_j^p\|\mathfrak b_j\|_p^p\leq \sum_{j=n}^m \lambda_j^p.$$
Hence, the series $\sum_{j\in \mathbb{N}} \lambda_j\mathfrak b_j$ converges in $\ell^p(\mathbb Z)$. We write $H=\sum_{j\in \mathbb{N}}\lambda_j\mathfrak b_j$ in $\ell^p(\mathbb Z)$. We have that
$$H(n)=\sum_{j\in \mathbb{N}}\lambda_j\mathfrak b_j(n),\quad n\in\mathbb Z.$$
We consider, for every $n\in\mathbb Z$, $\mathfrak b^n=(\mathfrak b^n(m))_{m\in\mathbb Z}$, where $b^n(n)=1$ and $b^n(m)=0$,
 $m\in\mathbb Z$, $m\neq n$. It is clear that $ \mathfrak b^n\in\mathcal L_{1/p-1}$, $n\in\mathbb Z$. Then,
 $$h(n):=\lim_{m\rightarrow\infty}\sum_{j=0}^m\lambda_j\sum_{n\in\mathbb Z}\mathfrak b_j(m)\mathfrak b^n(m)=\sum_{j\in \mathbb{N}}\lambda_j\mathfrak b_j(n),\quad n\in\mathbb Z.$$
It follows that $H=h$. Hence, $H^{p,q}(\mathbb Z)$ is contained in $\ell^p(\mathbb Z)$. Moreover, this inclusion is continuous.

Let $q>1$. The space $H^{1,q}(\mathbb Z)$ consists of all $h\in\ell^1(\mathbb Z)$ such that $h=\sum_{j\in \mathbb{N}} \lambda_j\mathfrak b_j$ in $\ell^1(\mathbb Z)$, where, for every $j\in\mathbb N$, $\lambda_j>0$ and $\mathfrak b_j$ is a $(1,q)$-atom, and $\sum_{j\in \mathbb{N}}\lambda_j<\infty$. For every $h\in H^{1,q}(\mathbb Z)$,
$$\|h\|_{H^{1,q}(\mathbb Z)}=\inf\sum_{j\in \mathbb{N}} \lambda_j,$$
where the infimum is taken over all those sequences $(\lambda_j)_{j\in \mathbb{N}}$ of nonnegative real numbers such that $\sum_{j\in \mathbb{N}}\lambda_j<\infty$ and $h=\sum_{j\in \mathbb{N}}\lambda_j\mathfrak b_j$ in $\ell^1(\mathbb Z)$, being $\mathfrak b_j$ a $(1,q)$-atom, for every $j\in\mathbb N$.

According to \cite[Theorem A]{CW} $H^{p,q}(\mathbb Z)=H^{p,\infty}(\mathbb Z)$ algebraically and topologically, provided that $0<p<q$ and $p\leq 1\leq q\leq\infty$.

By proceeding as in the continuous case, that is, in the $H^{p}(\mathbb R)$ case (see \cite[Theorem 7.7 and the following ones]{GCRF} and \cite[p. 598]{CW}), from \cite[Propositions 3 and 4]{CGRTV} we can prove the following result.

\begin{thm}\label{Th1.1}
Let $1/2<p\leq 1$. Then, the operators $W_*$ and $g$ are bounded from $H^{p,\infty}(\mathbb Z)$ into $\ell^p(\mathbb Z)$.
\end{thm}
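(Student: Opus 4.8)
The plan is to carry out the classical argument for $H^p\to L^p$ boundedness of singular-integral-type operators, reducing matters to a uniform estimate on atoms. I first record that $W_t$ is a contraction on $\ell^1(\mathbb Z)$, since $G(\cdot,t)\ge 0$ and $\sum_{n\in\mathbb Z}G(n,t)=1$; as $H^{p,\infty}(\mathbb Z)\subset\ell^p(\mathbb Z)\subset\ell^1(\mathbb Z)$ for $0<p\le 1$, every $h=\sum_j\lambda_j\mathfrak b_j\in H^{p,\infty}(\mathbb Z)$ (the series converging in $\ell^p(\mathbb Z)$, hence in $\ell^1(\mathbb Z)$) satisfies $W_t(h)=\sum_j\lambda_jW_t(\mathfrak b_j)$ in $\ell^1(\mathbb Z)$, and so coordinatewise. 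Together with the subadditivity of $W_*$ and of $g$ (the latter being an $L^2((0,\infty),\tfrac{dt}{t})$-norm of a linear function of $f$), a standard limiting argument as in the classical setting (cf. \cite[Theorem 7.7 and the following ones]{GCRF}) shows that it suffices to produce $C>0$ with $\|W_*(\mathfrak b)\|_p\le C$ and $\|g(\mathfrak b)\|_p\le C$ for every $(p,\infty)$-atom $\mathfrak b$; taking the infimum over atomic representations and using that $\|\cdot\|_p^p$ is subadditive then yields the theorem.

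So let $\mathfrak b$ be a $(p,\infty)$-atom, supported in $B=B_{\mathbb Z}(n_0,r_0)$ with $\|\mathfrak b\|_\infty\le\mu(B)^{-1/p}$ and $\sum_n\mathfrak b(n)=0$, and put $\widetilde B=B_{\mathbb Z}(n_0,2r_0)$. For the part of the $\ell^p$-sum over $\widetilde B$ I would invoke the $\ell^2(\mathbb Z)$-boundedness of $W_*$ and $g$ (valid by \cite{StLP}, since $\{W_t\}_{t>0}$ is a diffusion semigroup in Stein's sense) together with H\"older's inequality with exponent $2/p>1$:
\[
\sum_{n\in\widetilde B}|W_*(\mathfrak b)(n)|^p\le\mu(\widetilde B)^{1-p/2}\,\|W_*(\mathfrak b)\|_2^p\le C\,\mu(B)^{1-p/2}\,\|\mathfrak b\|_2^p,
\]
and the same for $g$; since $\|\mathfrak b\|_2\le\|\mathfrak b\|_\infty\mu(B)^{1/2}\le\mu(B)^{1/2-1/p}$, the right-hand side is at most $C\,\mu(B)^{1-p/2}\mu(B)^{p/2-1}=C$.

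For the part over $\mathbb Z\setminus\widetilde B$, fix $n\notin\widetilde B$ and use the cancellation to write, for $t>0$,
\[
W_t(\mathfrak b)(n)=\sum_{m\in B}\bigl(G(n-m,t)-G(n-n_0,t)\bigr)\mathfrak b(m),
\]
and similarly with $t\partial_tG$ in place of $G$ in the definition of $g$. The crucial ingredient, exactly the Calder\'on--Zygmund type smoothness of the discrete heat kernel provided by \cite[Propositions 3 and 4]{CGRTV}, is that summing the Lipschitz bound $\sup_{t>0}|G(k+1,t)-G(k,t)|\lesssim|k|^{-2}$ and its $L^2(\tfrac{dt}{t})$ analogue over the integers $k$ between $n-m$ and $n-n_0$ — all of which satisfy $|k|\asymp|n-n_0|$ because $|m-n_0|\le r_0<|n-n_0|/2$ — gives
\[
\sup_{t>0}\bigl|G(n-m,t)-G(n-n_0,t)\bigr|+\Bigl(\int_0^\infty\bigl|t\partial_t\bigl(G(n-m,t)-G(n-n_0,t)\bigr)\bigr|^2\tfrac{dt}{t}\Bigr)^{1/2}\le C\,\frac{r_0}{|n-n_0|^2}.
\]
By Minkowski's integral inequality it follows that $W_*(\mathfrak b)(n)+g(\mathfrak b)(n)\le C\,r_0\,|n-n_0|^{-2}\,\|\mathfrak b\|_1\le C\,r_0\,|n-n_0|^{-2}\,\mu(B)^{1-1/p}$, and therefore
\[
\sum_{n\notin\widetilde B}\bigl(W_*(\mathfrak b)(n)^p+g(\mathfrak b)(n)^p\bigr)\le C\,\mu(B)^{p-1}r_0^p\sum_{|k|>2r_0}\frac{1}{|k|^{2p}}\le C\,\mu(B)^{p-1}r_0^p\,r_0^{1-2p}\le C,
\]
where we used $2p>1$ to sum the tail and $\mu(B)\asymp r_0$ at the end.

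The genuinely hard analytic work — the kernel bounds $\sup_{t>0}|G(k+1,t)-G(k,t)|\lesssim|k|^{-2}$ and their square-function counterpart — is already done in \cite{CGRTV}; the remaining work is the bookkeeping sketched above, which also shows why $p>1/2$ is the natural threshold: an atom carries only one vanishing moment and the discrete heat kernel has only Lipschitz ($\gamma=1$) spatial regularity, so the tail $\sum_{|k|>2r_0}|k|^{-2p}$ is summable precisely when $p>1/2$. I do not anticipate any real obstacle beyond making the discrete mean value inequality and the limiting reduction to atoms rigorous.
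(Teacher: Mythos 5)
Your proposal is correct and follows essentially the same route the paper takes (the paper only sketches it, citing the standard atomic argument from \cite[p.~586]{CW} and \cite[Theorem 7.7]{GCRF} together with the size and Lipschitz estimates of \cite[Propositions 3 and 4]{CGRTV}): H\"older plus $\ell^2$-boundedness on the doubled ball, and cancellation of the atom against the kernel smoothness bounds $O(|k|^{-2})$ off it, with $2p>1$ summing the tail. The only cosmetic difference is that you work with $(p,\infty)$-atoms while the paper uses $(p,2)$-atoms, which is immaterial since $H^{p,q}(\mathbb Z)=H^{p,\infty}(\mathbb Z)$ by \cite[Theorem A]{CW}.
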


In order to extend this property to values of $p\in(0,1/2]$ we consider the discrete Hardy spaces studied in \cite{BC1} and \cite{BC2}.

Let $0<p<q$ and $p\leq 1\leq q\leq\infty$. We say that a complex sequence $\mathfrak b$ is a $(H,p,q)$-atom when there exist $n_0\in\mathbb Z$ and $r_0\geq 1$ such that,
\begin{enumerate}
\item[(i)] The support of $\mathfrak b$ is contained in the ball $B_{\mathbb Z}(n_0,r_0)$;
\item[(ii)] $\|\mathfrak b\|_q\leq \mu\left(B_{\mathbb Z}(n_0,r_0)\right)^{1/q-1/p}$, where $1/q$ is understood to be $0$ when $q=\infty$;
\item[(iii)] $\sum_{n\in\mathbb Z}n^\alpha\mathfrak b(n)=0$, for every $\alpha\in\mathbb N$ such that $\alpha\leq 1/p-1$.
\end{enumerate}

We define the Hardy space $\mathcal H^{p,q}(\mathbb Z)$ in the same way that $H^{p,q}(\mathbb Z)$ where $(H,p,q)$-atoms replace $(p,q)$-atoms. We have that $\mathcal H^{p,q}(\mathbb Z)=\mathcal H^{p,\infty}(\mathbb Z)$ algebraically and topologically.

Hardy spaces  $\mathcal H^{p,q}(\mathbb Z)$ can be characterized by using discrete Hilbert transform. In \cite[Theorems 3.10 and 3.14]{BC1} it was established that a complex sequence $h$ is in $\mathcal H^{p,\infty}(\mathbb Z)$ if and only if $h\in\ell^p(\mathbb Z)$ and $H_d(h)\in\ell^p(\mathbb Z)$. Moreover, for every $h\in\mathcal H^{p,\infty}(\mathbb Z)$, the quantities $\|h\|_{\mathcal H^{p,\infty}(\mathbb Z)}$ and $\|h\|_p+\|H_d(h)\|_p$ are equivalent. In \cite{BC1} and \cite{BC2} the space $\mathcal H^{p,\infty}(\mathbb Z)$ is also characterized by using maximal operators and square functions.

For every $0<p\leq 1$ we write $\mathcal H^p(\mathbb Z)$ for naming $\mathcal H^{p,\infty}(\mathbb Z)$.

The main results of this paper are the following ones.

\begin{thm}\label{Th1.2}
Let $0<p\leq 1$. The operators $W_*$ and $g$ are bounded from $\mathcal H^p(\mathbb Z)$ into $\ell^p(\mathbb Z)$.
\end{thm}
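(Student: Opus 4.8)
The plan is to use the atomic decomposition of $\mathcal H^p(\mathbb Z)$. If $1/2<p\le 1$ then $1/p-1<1$, so the only moment condition on an $(H,p,\infty)$-atom is $\sum_n\mathfrak b(n)=0$; hence $\mathcal H^p(\mathbb Z)=H^{p,\infty}(\mathbb Z)$ and the statement is Theorem~\ref{Th1.1}. Assume therefore $0<p\le 1/2$ and let $N$ be the largest natural number with $N\le 1/p-1$; then $N\ge 1$ and $(N+2)p>1$. I would first check that $W_*$ and $g$ are $p$-subadditive along atomic series: writing $h=\sum_j\lambda_j\mathfrak b_j$ with $\sum_j\lambda_j^p<\infty$, we have $\sum_j\lambda_j<\infty$ (as $0<p\le 1$), and, since $W_t$ is convolution by the probability sequence $G(\cdot,t)$, one may interchange the sum in $j$ with $W_t$ and with $\partial_t$ (using $\partial_tG=-\Delta_dG$); Minkowski's inequality in $L^2((0,\infty),dt/t)$ then gives $W_*(h)(n)\le\sum_j\lambda_j W_*(\mathfrak b_j)(n)$ and $g(h)(n)\le\sum_j\lambda_j g(\mathfrak b_j)(n)$. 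Combined with $(\sum_ja_j)^p\le\sum_ja_j^p$ for $a_j\ge 0$, this reduces the theorem to the uniform estimate
\[
\|W_*(\mathfrak b)\|_{\ell^p(\mathbb Z)}+\|g(\mathfrak b)\|_{\ell^p(\mathbb Z)}\le C
\]
over all $(H,p,\infty)$-atoms $\mathfrak b$. Since $\Delta_d$, $W_t$, $W_*$ and $g$ commute with integer translations and the moment conditions are translation invariant, I normalise $\mathfrak b$ to be supported in $\{n:|n|\le r_0\}$, $r_0\ge 1$, with $\|\mathfrak b\|_{\ell^\infty(\mathbb Z)}\le(2r_0+1)^{-1/p}$ and $\sum_n n^\alpha\mathfrak b(n)=0$ for all integers $0\le\alpha\le N$.

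\textbf{Step 2: the local part.} On $\{|n|\le 4r_0\}$ I would only use the $\ell^2(\mathbb Z)$-boundedness of $W_*$ and $g$ (the case $p=2$ of the bounds recalled in the Introduction), Hölder's inequality with exponent $2/p$, and $\|\mathfrak b\|_{\ell^2(\mathbb Z)}\le(2r_0+1)^{1/2}\|\mathfrak b\|_{\ell^\infty(\mathbb Z)}\le(2r_0+1)^{1/2-1/p}$; a one-line computation then bounds $\sum_{|n|\le 4r_0}\bigl((W_*\mathfrak b)(n)\bigr)^p$ and $\sum_{|n|\le 4r_0}\bigl((g\mathfrak b)(n)\bigr)^p$ by a constant.

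\textbf{Step 3: the global part.} For $|n|>4r_0$ I would exploit the cancellation of $\mathfrak b$ via Taylor's formula --- in finite-difference form (Newton's expansion with the forward difference $\delta F(k)=F(k+1)-F(k)$), or equivalently for a suitable smooth extension of $G(\cdot,t)$ in the order variable. Subtracting from the kernels $G(n-\cdot\,,t)$ and $t\partial_t G(n-\cdot\,,t)$ their degree-$N$ Newton polynomials (which $\mathfrak b$ annihilates), and using $\|\mathfrak b\|_{\ell^1(\mathbb Z)}\le(2r_0+1)^{1-1/p}$, one is left with the kernel estimates
\[
\sup_{t>0}\bigl|(\delta^{N+1}G)(k,t)\bigr|\le C_N|k|^{-(N+2)},\qquad\int_0^\infty\bigl|t\partial_t(\delta^{N+1}G)(k,t)\bigr|^2\,\frac{dt}{t}\le C_N|k|^{-2(N+2)},
\]
for $|k|\ge 1$, together with the fact that these quantities change by at most a bounded factor as $k$ varies over an interval of length $\lesssim r_0<|n|/3$ around $n$. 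Granting this, for $|n|>4r_0$ one gets $W_*(\mathfrak b)(n)+g(\mathfrak b)(n)\le C_N\,r_0^{\,N+2-1/p}\,|n|^{-(N+2)}$, and since $(N+2)p>1$,
\[
\sum_{|n|>4r_0}\bigl(W_*(\mathfrak b)(n)+g(\mathfrak b)(n)\bigr)^p\le C_N\,r_0^{(N+2)p-1}\sum_{|n|>4r_0}|n|^{-(N+2)p}\le C_N\,r_0^{(N+2)p-1}\,r_0^{1-(N+2)p}=C_N,
\]
which, together with Step 2, finishes the proof.

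\textbf{Step 4: the main obstacle.} Given the two displayed estimates on the differences of $G(n,t)=e^{-2t}I_n(2t)$ and of $t\partial_t G(n,t)$, everything above is routine; proving those estimates is the crux. For $N=0$ they are essentially \cite[Propositions 3 and 4]{CGRTV}. For general $N$ I would derive them from the recurrences and asymptotics of the modified Bessel functions $I_n$, treating separately the regime $t\gtrsim|n|$, where $G(n,t)$ is comparable to $t^{-1/2}e^{-cn^2/t}$ and each spatial difference $\delta$ supplies a gain of order $(t+n^2)^{-1/2}$ (the time-scaling $t\partial_t$ being harmless under the $dt/t$ integration), and the regime $t\lesssim|n|$, where $G$ and all its differences decay super-exponentially in $|n|$, which also secures convergence of the $t$-integral near $0$. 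A small additional point to record is that a nonzero $(H,p,\infty)$-atom with $N+1$ vanishing moments supported on $2r_0+1$ points forces $r_0\gtrsim N$, so that $|k|\asymp|n|>4r_0\gtrsim N$ throughout the global region. Producing these Bessel estimates in a form uniform in $k$, valid for all $t>0$, and stable under the $r_0$-localisation is the delicate technical part.
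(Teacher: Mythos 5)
Your Steps 1--3 correctly reproduce the skeleton of the paper's own argument: atomic decomposition, the local estimate via $\ell^2$-boundedness and H\"older, and the global estimate obtained by subtracting from the kernel a polynomial of degree $N$ (annihilated by the moment conditions of the atom) and invoking a decay estimate of order $|n|^{-(N+2)}$ on an $(N+1)$-st order difference or derivative; your bookkeeping ($N=E[1/p-1]$, $(N+2)p>1$) matches the paper's ($k=E[1/p]$, $(k+1)p>1$). The genuine gap is Step 4. The two displayed kernel estimates,
\[
\sup_{t>0}\bigl|(\delta^{N+1}G)(k,t)\bigr|\leq C_N|k|^{-(N+2)},\qquad
\bigl\|t\partial_t(\delta^{N+1}G)(k,\cdot)\bigr\|_{L^2((0,\infty),dt/t)}\leq C_N|k|^{-(N+2)},
\]
are not a side condition to be ``granted'': they are the entire technical content of the paper (all of Section 2, Propositions \ref{Prop2.4}--\ref{derivadaHk}), and you explicitly defer them. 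The heuristic you offer --- that for $t\gtrsim|n|$ one has $G(n,t)\approx t^{-1/2}e^{-cn^2/t}$ and each spatial difference gains a factor $(t+n^2)^{-1/2}$ --- does not by itself constitute a proof: a pointwise two-sided comparison of $G$ gives no control of its differences, since the point of differencing is cancellation, and extracting difference bounds from Bessel asymptotics requires differencing the expansion \emph{and} its error terms uniformly in $n$ and $t$. Already the first-order case $N=0$ in \cite{CGRTV} is proved from an integral representation of $I_\nu$ rather than from raw asymptotics.

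For comparison, the paper sidesteps the difference calculus entirely: starting from $G(m,t)=\frac1\pi\int_0^\pi e^{-2t(1-\cos\theta)}\cos(m\theta)\,d\theta$ and integrating by parts $k$ times (Lemma \ref{Lem2.3}), it constructs an explicit smooth extension $F_k(z,t)$ of $G(\cdot,t)$ in the order variable and proves $\sup_{t>0}|\partial_z^kF_k(z,t)|\leq Cz^{-(k+1)}$ together with the companion $L^2(dt/t)$ bound (Proposition \ref{derivadaHk}); the required control of $\partial_\theta^k\phi_t$ comes from Fa\`a di Bruno's formula and a case analysis on the exponent $\sigma_{m_1,\dots,m_k}$ (Propositions \ref{Prop2.4}--\ref{Prop2.6}). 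You mention this alternative in passing (``or equivalently for a suitable smooth extension of $G(\cdot,t)$ in the order variable''), but neither route is carried out, so as written the argument is incomplete precisely at its crux.
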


\begin{thm}\label{Th1.3}
Assume that $\mathfrak m\in L^\infty(0,\infty)$ and that $\mathfrak m(\lambda)=\lambda\int_0^\infty e^{-\lambda t}\Psi(t)dt$, $\lambda\in(0,\infty)$, where $\Psi\in L^\infty(0,\infty)$. Then, the  operator $T_{\mathfrak m}$ is bounded from $\ell^p(\mathbb Z,\omega)$ into itself, for every $1<p<\infty$ and $\omega\in A_p(\mathbb Z)$, from $\ell^1(\mathbb Z,\omega)$ into $\ell^{1,\infty}(\mathbb Z,\omega)$, when $\omega\in A_1(\mathbb Z)$, and from  $\mathcal H^p(\mathbb Z)$ into itself for every $0<p\leq 1$.
\end{thm}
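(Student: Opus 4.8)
The plan is to prove the three mapping properties separately, reducing the first two to known Calderón–Zygmund theory and the third to the atomic characterization of $\mathcal H^p(\mathbb Z)$. For the weighted $\ell^p$ and weak-$\ell^1$ statements, the strategy is to realize $T_{\mathfrak m}$ as a Banach-valued singular integral operator, exactly in the spirit of \cite[\S 6]{CGRTV}. Writing $\mathfrak m(\lambda)=\lambda\int_0^\infty e^{-\lambda t}\Psi(t)\,dt$ and recalling $W_t=e^{-t\Delta_d}$, one formally has $T_{\mathfrak m}f=\int_0^\infty \Psi(t)\,\partial_t(-W_t(f))\,dt = \int_0^\infty \Psi(t)\,\Delta_d W_t(f)\,dt$, so $T_{\mathfrak m}$ is a convolution operator on $\mathbb Z$ with kernel $K(n)=\int_0^\infty \Psi(t)\,(\partial_t G)(n,t)\,dt$. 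The main work here is to check the standard kernel estimates $|K(n)|\lesssim |n|^{-1}$ and $|K(n)-K(n\pm1)|\lesssim |n|^{-2}$ for $|n|\geq 2$, using $\|\Psi\|_\infty<\infty$ together with the known pointwise and difference estimates for $G(n,t)=e^{-2t}I_n(2t)$ and $\partial_tG(n,t)$ that already appear in \cite{CGRTV} (these are precisely the bounds used there to treat $W_*$ and $g$); the $\ell^2$-boundedness is immediate from the definition via $\mathcal F_{\mathbb Z}$ and $\mathfrak m\in L^\infty$. Once $T_{\mathfrak m}$ is seen to be a Calderón–Zygmund operator on $(\mathbb Z,|\cdot|,\mu)$, the weighted $A_p$ bounds and the $\ell^1(\omega)\to\ell^{1,\infty}(\omega)$ bound follow from the general weighted theory for spaces of homogeneous type, as invoked in \cite{CGRTV} for $H_d$.

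For the boundedness from $\mathcal H^p(\mathbb Z)$ into itself, $0<p\leq 1$, the plan is to use the atomic description combined with the Hilbert-transform characterization of $\mathcal H^p(\mathbb Z)$ from \cite[Theorems 3.10 and 3.14]{BC1}: a sequence $h$ lies in $\mathcal H^p(\mathbb Z)$ iff $h, H_d(h)\in\ell^p(\mathbb Z)$, with comparable norms. So it suffices to show $\|T_{\mathfrak m}h\|_p+\|H_d(T_{\mathfrak m}h)\|_p\lesssim \|h\|_p+\|H_d(h)\|_p$. Since $T_{\mathfrak m}$ and $H_d$ are both Fourier multipliers on $\mathbb Z$, they commute, so $H_d(T_{\mathfrak m}h)=T_{\mathfrak m}(H_d h)$, and the problem collapses to proving the single estimate $\|T_{\mathfrak m}g\|_p\lesssim \|g\|_p$ for $g\in\mathcal H^p(\mathbb Z)$ — i.e. $\mathcal H^p(\mathbb Z)\to\ell^p(\mathbb Z)$ boundedness — applied to both $g=h$ and $g=H_d h$. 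That estimate is proved by testing on $(H,p,\infty)$-atoms: given such an atom $\mathfrak b$ supported in $B_{\mathbb Z}(n_0,r_0)$, one splits $\|T_{\mathfrak m}\mathfrak b\|_p^p$ over the dilated ball $B_{\mathbb Z}(n_0,2r_0)$ and its complement. On the dilate one uses Hölder's inequality together with the $\ell^2$-boundedness of $T_{\mathfrak m}$ and the atom size condition (ii); off the dilate one uses the vanishing moments (iii) of order up to $\lfloor 1/p-1\rfloor$ to subtract a Taylor polynomial of the kernel $K$, and then the higher-order difference kernel estimates $|\Delta^{k+1}K(n)|\lesssim |n|^{-1-k}$ (again coming from the Bessel-function estimates and $\Psi\in L^\infty$) to sum the tail. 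Standard summation then gives $\|T_{\mathfrak b}\|_p^p\lesssim C$ uniformly in the atom, and superadditivity of $\|\cdot\|_p^p$ over the atomic decomposition plus the density/convergence arguments already set up in the excerpt upgrade this to $h\mapsto T_{\mathfrak m}h$.

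The main obstacle I expect is the off-diagonal kernel analysis with the right number of cancellations: one must establish, uniformly in $\Psi$ with $\|\Psi\|_\infty\le 1$, the decay estimates $|\Delta^{k}K(n)|\lesssim |n|^{-k}$ for all $0\le k\le \lfloor 1/p-1\rfloor+1$, where $\Delta$ denotes the discrete difference. This reduces to bounding $\int_0^\infty |\Delta^k\partial_t G(n,t)|\,dt$, which in turn requires careful control of $I_n(2t)$ and its $n$-differences in the regimes $t\lesssim |n|$ and $t\gtrsim |n|$ — the former governed by the sharp decay of modified Bessel functions when the order dominates the argument, the latter by the heat-kernel-type Gaussian bounds. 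These estimates are essentially available in \cite{CGRTV} for the first and second differences (used there for $W_*$ and $g$ in the weighted $L^1$ theory), so the real technical content is propagating them to arbitrary order $k$, which is needed precisely because $p$ can be taken arbitrarily small. Everything else — commuting multipliers, the atomic splitting, Hölder on the local part — is routine once these kernel bounds are in hand.
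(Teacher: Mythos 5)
Your treatment of the weighted $\ell^p$ and weak-$\ell^1$ bounds coincides with the paper's: both realize $T_{\mathfrak m}$ as a convolution Calder\'on--Zygmund operator with kernel $K(n)=-\int_0^\infty\Psi(t)\,\partial_tG(n,t)\,dt$ and verify the size and smoothness estimates (the paper obtains these, and their higher-order analogues, from Propositions \ref{Prop2.7} and \ref{derivadaHk} via the representation $I_n(t)=\frac1\pi\int_0^\pi e^{t\cos\theta}\cos(n\theta)\,d\theta$ and repeated integration by parts, rather than from two-regime Bessel asymptotics, but that is only a technical difference). Your atomic proof of the $\mathcal H^p(\mathbb Z)\to\ell^p(\mathbb Z)$ bound also matches the paper's. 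Where you genuinely diverge is the upgrade to boundedness from $\mathcal H^p(\mathbb Z)$ \emph{into itself}: the paper shows that $T_{\mathfrak m}$ sends $(H,p,2)$-atoms to $(H,p,2,k)$-molecules with uniformly bounded molecular norm, verifying the vanishing moments of $T_{\mathfrak m}(\mathfrak b)$ on the Fourier side near $\theta=0$, and then invokes the molecular characterization of \cite{KS}; you instead use the Hilbert-transform characterization of \cite{BC1} together with the commutation $H_dT_{\mathfrak m}=T_{\mathfrak m}H_d$. Your route is shorter but is tied to $T_{\mathfrak m}$ being a Fourier multiplier and to that specific characterization, whereas the molecular route is the one the authors advertise as portable to weighted settings.

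Two points in your version need repair. First, to estimate $\|H_d(T_{\mathfrak m}h)\|_p=\|T_{\mathfrak m}(H_dh)\|_p$ by applying the $\mathcal H^p\to\ell^p$ bound to $g=H_dh$, you must know that $H_dh\in\mathcal H^p(\mathbb Z)$ with $\|H_dh\|_{\mathcal H^p(\mathbb Z)}\lesssim\|h\|_{\mathcal H^p(\mathbb Z)}$; the characterization you quote gives membership of $h$, not of $H_dh$. This is true --- for instance, the multiplier of $H_d$ is $i\pi\,\mathrm{sgn}(\theta)e^{-i\theta/2}$, so $H_d^2$ is $-\pi^2$ times a unit shift and hence $\|H_d^2h\|_p=\pi^2\|h\|_p$, which via the characterization yields $H_d:\mathcal H^p(\mathbb Z)\to\mathcal H^p(\mathbb Z)$ --- but without some such argument the reduction does not close. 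Second, your stated higher-order decay rates are one power short: the correct ladder is $|\Delta^jK(n)|\lesssim|n|^{-j-1}$, consistent with your $j=0,1$ cases and with \eqref{eq4.4}, not $|\Delta^kK(n)|\lesssim|n|^{-k}$; with the weaker rate the tail sum over $n\notin B_{\mathbb Z}(n_0,2r_0)$ involves $\sum|n-n_0|^{-(E[1/p-1]+1)p}$ with exponent $\le 1$ and diverges. With these two corrections your argument goes through.
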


The multipliers of Laplace transform type $T_{\mathfrak m}$ can be seen as special cases of the Fourier multipliers of Marcinkiewicz type considered in \cite{KS}. Then, we can deduce that $T_{\mathfrak m}$ defines a bounded operator from  $\mathcal H^p(\mathbb Z)$ into itself from \cite[Theorem 3]{KS}. In the proof of \cite[Theorem 3]{KS} Plancherel theorem for Fourier transform plays a key role. Our proof (see section 5) of $\mathcal H^p(\mathbb Z)$-boundedness of $T_{\mathfrak m}$ is different from the one presented in \cite[Theorem 3]{KS}. We apply Proposition 2.8 and we do not use Plancherel theorem. Our procedure is more flexible because for instance it could be used in a weighted setting (see, for instance, \cite[Theorem 4]{LL}). The study of discrete Hardy spaces with weights will be addressed in a future work.

The proofs of \cite[Propositions 3 and 4]{CGRTV} rely on the following integral representation of the modified Bessel function of first kind that is known as Schl\"afli's integral representation of Poisson type for $I_\nu$ (\cite[(5.10.22)]{Le}),
\begin{equation}\label{F1}
I_\nu(z)=\frac{z^\nu}{\sqrt{\pi}2^\nu\Gamma(\nu+1/2)}\int_{-1}^1e^{-zs}(1-s^2)^{\nu-1/2}ds,\quad |\mbox{arg}\;z|<\pi,\;\mbox{and}\;\nu>-1/2.
\end{equation}
In order to prove Theorems \ref{Th1.2} and \ref{Th1.3} we wil use, instead of (\ref{F1}), the following integral representation (\cite[p. 456]{PBM}),
\begin{equation}\label{F2}
I_n(t)=\frac{1}{\pi}\int_0^\pi e^{t\cos\theta}\cos(n\theta)d\theta,\;\;\;t>0,\;\mbox{and}\;n\in\mathbb Z.
\end{equation}
After writing the operators $W_*$, $g$ and $T_{\mathfrak m}$ as Banach valued singular integrals of convolution type in the homogeneous group $(\mathbb Z,|\cdot|,\mu)$, we have to estimate derivatives of the respective kernels of those singular integrals. The integral representation (\ref{F2}) plays a key role to obtain the mentioned estimates. 

This paper is organized as follows. After this introduction in Section 2 we establish some auxiliary results that will be useful in the sequel. Theorem \ref{Th1.2} is proved in Section 3 for $W_*$ and in Section 4 for $g$. A proof for Theorem \ref{Th1.3} is written in Section 5.

Throughout this paper by $C$ and $c$ we always represent positive constants that can change from one line to the other one, and by $E[\alpha]$ we mean the highest integer number less or equal to $\alpha\in\mathbb R$.

\section{Auxiliary results}

In this section we establish some properties that will be useful in the sequel.
                      
The semigroup of operators $\{W_t\}_{t>0}$ generated by $-\Delta_d$ in $\ell^p(\mathbb Z)$ is defined by
$$                           
W_t(f)(n)=\sum_{m\in \mathbb{Z}}G(n-m,t)f(m),\quad t>0,
$$
for every $f\in \ell^p(\mathbb Z)$, $1\leq p\leq \infty$, where $G(m,t)=e^{-2t}I_m(2t)$, $t>0$ and $m\in \mathbb{Z}$.

According to (\ref{F2}) we have that
$$
G(m,t)=\frac{1}{\pi}\int_0^\pi \phi_t(\theta )\cos (m\theta)d\theta, \quad t>0\mbox{ and }m\in \mathbb{Z},
$$
where $\phi _t(\theta)=e^{-2t(1-\cos \theta)}$, $t>0$ and $\theta \in (0,\pi)$.

From now on, for every $k\in \mathbb{N}$, we consider
$$
h_k(\theta )=\left\{\begin{array}{ll}
					\sin \theta,&\mbox{ if }k\mbox{ is odd},\\
                    \cos \theta,&\mbox{ if }k\mbox{ is even},
              \end{array}
\right. \quad \theta \in (0,\pi ).
$$
Firstly we obtain a representation for the derivative of $\phi _t$, $t>0$.

\begin{lem}\label{Lem2.1}
Let $k\in\mathbb N$, $h\geq 1$. We have that, for every $t>0$ and $\theta\in (0,\pi)$,
\begin{align*}
\partial_\theta ^k\phi _t(\theta)&=\phi_t(\theta)\sum_{\substack{(m_1,...,m_k)\in\mathbb{N}^n\\m_1+2m_2+...+km_k=k}}a_{m_1,...,m_k}t^{m_1+...m_k}(\cos \theta)^{\alpha_{m_1,...,m_k}}(\sin \theta)^{\beta_{m_1,...,m_k}},
\end{align*}
where, for each $(m_1,...,m_k)\in\mathbb{N}^n$ such that $m_1+2m_2+...+km_k=k$,
$$
\alpha _{m_1,...,m_k}=\sum_{j=1}^{E[k/2]}m_{2j},
$$
$$
\beta _{m_1,...,m_k}=\sum_{j=1}^{E[(k+1)/2]}m_{2j-1},
$$
and
$$
a_{m_1,...,m_k}=(-1)^{\sum_{j=1}^{E[k/2]}jm_{2j}+\sum_{j=1}^{E[(k+1)/2]}jm_{2j-1}}\frac{ 2^{m_1+...+m_k}k!}{m_1!1!^{m_1}m_2!2!^{m_2}\cdots m_k!k!^{m_k}}.
$$
\end{lem}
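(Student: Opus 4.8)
The plan is to read the right-hand side as the complete Bell polynomial expansion produced by Faà di Bruno's formula applied to the composition $\phi_t=\exp\circ\,\psi_t$, where $\psi_t(\theta)=-2t(1-\cos\theta)$. I would start from the standard form of Faà di Bruno's formula for an exponential: for any smooth function $\psi$,
$$\partial_\theta^k e^{\psi(\theta)}=e^{\psi(\theta)}\sum_{\substack{(m_1,\dots,m_k)\in\mathbb N^k\\ m_1+2m_2+\cdots+km_k=k}}\frac{k!}{m_1!\,1!^{m_1}m_2!\,2!^{m_2}\cdots m_k!\,k!^{m_k}}\prod_{j=1}^k\bigl(\psi^{(j)}(\theta)\bigr)^{m_j},$$
which I take as known. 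Thus everything reduces to inserting the explicit derivatives of $\psi_t$ into this identity and rearranging the product.

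Next I would compute those derivatives. Since $\psi_t(\theta)=-2t+2t\cos\theta$, one has $\psi_t^{(j)}(\theta)=2t\,\partial_\theta^j\cos\theta=2t\cos(\theta+j\pi/2)$ for every $j\geq 1$; splitting according to the parity of $j$ and evaluating $\cos(j\pi/2)$ and $\sin(j\pi/2)$ gives
$$\psi_t^{(2i-1)}(\theta)=(-1)^i\,2t\sin\theta,\qquad \psi_t^{(2i)}(\theta)=(-1)^i\,2t\cos\theta,\qquad i\geq 1,$$
as one checks on $j=1,2,3,4$, where $\psi_t^{(1)}=-2t\sin\theta$, $\psi_t^{(2)}=-2t\cos\theta$, $\psi_t^{(3)}=2t\sin\theta$, $\psi_t^{(4)}=2t\cos\theta$. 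In a multi-index $(m_1,\dots,m_k)$ with $m_1+2m_2+\cdots+km_k=k$, the odd indices $j=2i-1$ that may occur are $i=1,\dots,E[(k+1)/2]$ and the even ones $j=2i$ are $i=1,\dots,E[k/2]$, so
$$\prod_{j=1}^k\bigl(\psi_t^{(j)}(\theta)\bigr)^{m_j}=\prod_{i=1}^{E[(k+1)/2]}\bigl((-1)^i2t\sin\theta\bigr)^{m_{2i-1}}\prod_{i=1}^{E[k/2]}\bigl((-1)^i2t\cos\theta\bigr)^{m_{2i}}.$$
Collecting the factors $2t$ yields $2^{m_1+\cdots+m_k}t^{m_1+\cdots+m_k}$; collecting the signs yields $(-1)^{\sum_{i=1}^{E[k/2]}im_{2i}+\sum_{i=1}^{E[(k+1)/2]}im_{2i-1}}$; and collecting the trigonometric factors yields $(\cos\theta)^{\alpha_{m_1,\dots,m_k}}(\sin\theta)^{\beta_{m_1,\dots,m_k}}$ with $\alpha$ and $\beta$ exactly as in the statement. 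Plugging this back into the Faà di Bruno formula and absorbing the multinomial coefficient $k!/(m_1!1!^{m_1}\cdots m_k!k!^{m_k})$ together with the power of $2$ and the sign into the single constant $a_{m_1,\dots,m_k}$ gives the claimed identity.

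The only genuine work is the parity bookkeeping — pinning down the sign $(-1)^i$ attached to $\psi_t^{(2i-1)}$ and $\psi_t^{(2i)}$, and confirming that the ranges $1\le j\le E[k/2]$ and $1\le j\le E[(k+1)/2]$ appearing in $\alpha_{m_1,\dots,m_k}$ and $\beta_{m_1,\dots,m_k}$ are precisely those allowed by the constraint $m_1+2m_2+\cdots+km_k=k$; there is no analytic difficulty. Alternatively, the formula can be proved by induction on $k$: differentiating the displayed expression produces one contribution from $\partial_\theta\phi_t=\phi_t\psi_t'$ and one from differentiating each monomial $(\cos\theta)^\alpha(\sin\theta)^\beta$, and one checks that the resulting multi-indices and coefficients are exactly those indexing $\partial_\theta^{k+1}\phi_t$ — this amounts to the recurrence satisfied by the complete Bell polynomials — but the direct argument above is cleaner. (The hypothesis $h\geq 1$ plays no role here.)
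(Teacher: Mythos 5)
Your proof is correct and follows essentially the same route as the paper: both apply Fa\`a di Bruno's formula to the composition of an exponential with (an affine shift of) $\cos\theta$, compute the successive derivatives of the inner function via $\cos(\theta+j\pi/2)$, and carry out the same parity bookkeeping for the signs, the powers of $2t$, and the exponents $\alpha_{m_1,\dots,m_k}$, $\beta_{m_1,\dots,m_k}$; the only cosmetic difference is that you place the factor $2t$ in the inner function $\psi_t$ while the paper places it in the outer function $\Psi(z)=e^{2tz}$, which yields the identical total factor $(2t)^{m_1+\cdots+m_k}$.
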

\begin{proof}
Let $t\in (0,\infty )$. We define $\Psi (z)=e^{2tz}$, $z\in \mathbb{R}$, and $\varphi(\theta)=\cos \theta -1$, $\theta \in(0,\pi)$. It is clear that $\phi _t(\theta )=(\Psi\circ \varphi)(\theta)$, $\theta \in (0,\pi)$. 

By using Fa\`a di Bruno formula (\cite{FaBruno}) we can write
\begin{align*}
\partial_\theta ^k\phi _t(\theta)&=(\Psi \circ\varphi)^{(k)}(\theta)\\
&=\hspace{-1cm}\sum_{\substack{(m_1,...,m_k)\in\mathbb{N}^n\\m_1+2m_2+...+km_k=k}}\frac{k!}{m_1!1!^{m_1}m_2!2!^{m_2}\cdots m_k!k!^{m_k}}\Psi ^{(m_1+...+m_k)}(\varphi (\theta))\prod_{r=1}^k(\varphi ^{(r)}(\theta))^{m_r},\quad \theta \in (0,\pi).
\end{align*}
Since for each $r\in \mathbb{N}$, $\Psi^{(r)}(z)=(2t)^r\Psi (z)$,  and when $r\geq 1$,
$$
\varphi ^{(r)}(\theta)=\cos (\theta +r\pi/2)=(-1)^{E[(r+1)/2]}h_r(\theta)=\left\{\begin{array}{ll}
(-1)^{(r+1)/2}\sin \theta,&\mbox{ if }r\mbox{ is odd},\\[0.2cm]
(-1)^{r/2}\cos \theta,&\mbox{ if } r\mbox{ is even},
\end{array}
\right.
,\quad \theta \in (0,\pi), 
$$
we can finish the proof of the lemma.
\end{proof}
An immediate consequence of Lemma \ref{Lem2.1} is the following.
\begin{lem}\label{Lem2.2}
Let $k\in\mathbb N$ be odd. Then $\partial_\theta ^k\phi _t(0)=\partial_\theta ^k\phi _t(\pi)=0$, $t\in (0,\infty )$.
\end{lem}

Next property follows by using partial integration and Lemma \ref{Lem2.2}.
\begin{lem}\label{Lem2.3}
Let $k\in\mathbb N$. We have that
$$
G(m,t)=\frac{(-1)^{[(k+1)/2]}}{\pi m^k}\int_0^\pi \partial _\theta ^k\phi _t(\theta)h_k (m\theta )d\theta ,\quad m\in \mathbb{Z}\setminus\{0\},\;t>0.
$$
\end{lem}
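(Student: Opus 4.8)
The plan is to prove Lemma \ref{Lem2.3} by repeated integration by parts in the integral representation
$$
G(m,t)=\frac{1}{\pi}\int_0^\pi \phi_t(\theta)\cos(m\theta)\,d\theta, \quad m\in\mathbb{Z}\setminus\{0\},\ t>0,
$$
peeling off one derivative of $\phi_t$ per integration. First I would record the elementary antiderivative identities for the trigonometric factors: if $k$ is even, $\cos(m\theta)=h_k(m\theta)$ and an antiderivative of $h_k(m\theta)$ is $\frac{1}{m}h_{k+1}(m\theta)$ up to sign, while if $k$ is odd the roles of $\sin$ and $\cos$ swap; more precisely $\partial_\theta h_{k+1}(m\theta) = \pm m\, h_k(m\theta)$ with a sign depending on the parity of $k$. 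The cleanest bookkeeping is to note $h_k(m\theta)=\cos(m\theta - k\pi/2)$ up to sign, so that differentiation/integration in $\theta$ cycles through the $h_k$ with a quarter-period shift, exactly mirroring the computation $\varphi^{(r)}(\theta)=\cos(\theta+r\pi/2)$ used in Lemma \ref{Lem2.1}.

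Next I would carry out the induction on $k$. For $k=1$, integrate by parts once:
$$
G(m,t)=\frac{1}{\pi}\Big[\phi_t(\theta)\tfrac{\sin(m\theta)}{m}\Big]_0^\pi - \frac{1}{\pi m}\int_0^\pi \partial_\theta\phi_t(\theta)\sin(m\theta)\,d\theta.
$$
The boundary term vanishes because $\sin(m\theta)=0$ at $\theta=0$ and $\theta=\pi$ for $m\in\mathbb{Z}$, giving the $k=1$ case with $h_1=\sin$ and sign $(-1)^{[(1+1)/2]}=(-1)^1=-1$. For the inductive step, assume the formula holds for $k$; integrating by parts once more produces a boundary term involving $\partial_\theta^k\phi_t$ evaluated at $0$ and $\pi$ against the antiderivative of $h_k(m\theta)$, plus the bulk term with $\partial_\theta^{k+1}\phi_t$ against $h_{k+1}(m\theta)$. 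One must check the boundary term vanishes: when $k+1$ is odd the relevant antiderivative is a sine-type function vanishing at $0,\pi$; when $k+1$ is even (so $k$ is odd) the antiderivative is cosine-type and does not vanish at the endpoints, but then Lemma \ref{Lem2.2} applies and kills $\partial_\theta^k\phi_t(0)=\partial_\theta^k\phi_t(\pi)=0$. Either way the boundary contribution is zero, and tracking the factor $1/m$ and the quarter-shift sign yields the claimed $(-1)^{[(k+2)/2]}/(\pi m^{k+1})$ prefactor.

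The only real subtlety — and the step I would flag as the main point to get right — is the sign and parity bookkeeping: one has to verify that the accumulated sign after $k$ integrations by parts equals $(-1)^{[(k+1)/2]}$ and that the trigonometric factor is exactly $h_k(m\theta)$ (not $h_{k+1}$ or a shifted variant). This is most safely handled by writing $\cos(m\theta)=\mathrm{Re}\,e^{im\theta}$ or by using the identity $h_k(m\theta)=(-1)^{[k/2]}\cos(m\theta+k\pi/2)\cdot(\pm1)$ and checking that each integration by parts multiplies by $-1/m$ and advances the phase by $\pi/2$, so after $k$ steps the phase advance is $k\pi/2$ and the sign is $(-1)^k$ times the phase-normalization sign, which collapses to $(-1)^{[(k+1)/2]}$. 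Since no boundary term ever survives (by the $\sin$-vanishing for odd total shift and by Lemma \ref{Lem2.2} for even total shift), the induction closes cleanly and the lemma follows.
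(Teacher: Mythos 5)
Your proposal is correct and follows exactly the route the paper indicates: the paper states only that Lemma \ref{Lem2.3} ``follows by using partial integration and Lemma \ref{Lem2.2},'' and your induction with repeated integration by parts --- boundary terms killed alternately by $\sin(m\pi)=0$ for integer $m$ and by Lemma \ref{Lem2.2} for the odd-order derivatives --- together with the sign check $(-1)^{[(k+1)/2]}$ is precisely that argument, carried out in full detail.
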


In the next three propositions we collect some fundamental boundedness properties that will be useful for our proofs. 
\begin{prop}\label{Prop2.4}
Let $k\in\mathbb N$, $k\geq 1$. Then,
$$
\sup_{t\in (0,\infty )}\int_0^\pi |\partial _\theta ^k\phi _t(\theta)|\theta ^{k-1}d\theta <\infty.
$$
\end{prop}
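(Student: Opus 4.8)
The plan is to combine the explicit formula for $\partial_\theta^k\phi_t$ given in Lemma~\ref{Lem2.1} with the elementary bounds $0\le 1-\cos\theta$ and $1-\cos\theta=2\sin^2(\theta/2)\ge \tfrac{2}{\pi^2}\theta^2$ valid on $[0,\pi]$, together with the parabolic rescaling $u=\sqrt t\,\theta$. By Lemma~\ref{Lem2.1} and the triangle inequality it is enough to bound, uniformly in $t>0$, each of the finitely many integrals
$$
t^{m_1+\cdots+m_k}\int_0^\pi \phi_t(\theta)\,|\cos\theta|^{\alpha_{m_1,\dots,m_k}}|\sin\theta|^{\beta_{m_1,\dots,m_k}}\theta^{k-1}\,d\theta ,
$$
where $(m_1,\dots,m_k)\in\mathbb N^k$ and $m_1+2m_2+\cdots+km_k=k$. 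Writing $M=m_1+\cdots+m_k$, $\alpha=\alpha_{m_1,\dots,m_k}$ and $\beta=\beta_{m_1,\dots,m_k}$, and using $|\cos\theta|\le 1$, $|\sin\theta|\le\theta$ and $\phi_t(\theta)\le e^{-\frac{4t}{\pi^2}\theta^2}$, this quantity is at most $t^{M}\int_0^\pi e^{-\frac{4t}{\pi^2}\theta^2}\theta^{\beta+k-1}\,d\theta$.

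For $t\ge 1$ I would change variables $u=\sqrt t\,\theta$ to get
$$
t^{M}\int_0^\pi e^{-\frac{4t}{\pi^2}\theta^2}\theta^{\beta+k-1}\,d\theta=t^{M-(\beta+k)/2}\int_0^{\pi\sqrt t}e^{-\frac{4}{\pi^2}u^2}u^{\beta+k-1}\,du\le t^{M-(\beta+k)/2}\int_0^\infty e^{-\frac{4}{\pi^2}u^2}u^{\beta+k-1}\,du ,
$$
where the last integral is a finite constant. The crucial point is then the combinatorial inequality $2M\le\beta+k$, equivalently $2\alpha+\beta\le k$: since $\alpha=\sum_{j\ \mathrm{even}}m_j$ and $\beta=\sum_{j\ \mathrm{odd}}m_j$, the constraint $m_1+2m_2+\cdots+km_k=k$ gives $k=\sum_{j\ \mathrm{odd}}jm_j+\sum_{j\ \mathrm{even}}jm_j\ge\sum_{j\ \mathrm{odd}}m_j+2\sum_{j\ \mathrm{even}}m_j=\beta+2\alpha$, so $2M=2\alpha+2\beta\le k+\beta$ and hence $M-(\beta+k)/2\le 0$; the bound above is thus uniform over $t\ge 1$. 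For $0<t<1$ one argues directly: from Lemma~\ref{Lem2.1}, $\phi_t\le 1$, $|\cos\theta|\le1$, $|\sin\theta|\le\pi$ and $t^M\le 1$ give $|\partial_\theta^k\phi_t(\theta)|\le C$ uniformly in $\theta\in(0,\pi)$ and $t\in(0,1)$, so $\int_0^\pi|\partial_\theta^k\phi_t(\theta)|\theta^{k-1}\,d\theta\le C\int_0^\pi\theta^{k-1}\,d\theta<\infty$. Summing the finitely many terms of Lemma~\ref{Lem2.1} completes the argument.

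The only subtle step I expect is recognizing that the power of $t$ produced by the rescaling is controlled precisely by the degree relation $2\alpha+\beta\le k$ coming from $m_1+2m_2+\cdots+km_k=k$; once this is noticed everything else is routine. One could avoid the split at $t=1$ by keeping the factor $\int_0^{\pi\sqrt t}e^{-\frac{4}{\pi^2}u^2}u^{\beta+k-1}\,du$, which for small $t$ is comparable to $t^{(\beta+k)/2}$ and so cancels any negative power of $t$, but treating $t<1$ separately is shorter to write.
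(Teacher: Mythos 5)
Your proof is correct and follows essentially the same route as the paper: expand via Lemma~\ref{Lem2.1}, bound each term using $|\sin\theta|\leq\theta$ and $1-\cos\theta\geq c\theta^2$, and invoke the key combinatorial inequality $2\sum_j m_j\leq k+\beta_{m_1,\dots,m_k}$ derived from the constraint $m_1+2m_2+\cdots+km_k=k$, which is exactly the paper's observation \eqref{2.1}. The only (cosmetic) difference is in the final bookkeeping: the paper dominates each term pointwise by $Ct\theta e^{-ct\theta^2}$ and integrates, while you rescale $u=\sqrt{t}\,\theta$ and split at $t=1$; both yield the uniform bound.
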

\begin{proof}
According to Lemma \ref{Lem2.1}
\begin{equation}\label{partialphit}
\partial_\theta ^k\phi _t(\theta)=\sum_{\substack{(m_1,...,m_k)\in\mathbb{N}^k\\m_1+2m_2+...+km_k=k}}a_{m_1,...,m_k}A_{m_1,...,m_k}(t,\theta),\quad t>0,\;\theta \in (0,\pi),
\end{equation}
where, for each $(m_1,...,m_k)\in\mathbb{N}^k$ such that $m_1+2m_2+...+km_k=k$, $a_{m_1,...,m_k}\in \mathbb{R}$ and
$$
A_{m_1,...,m_k}(t,\theta)=e^{-2t(1-\cos \theta)}t^{m_1+...m_k}(\cos \theta)^{\alpha_{m_1,...,m_k}}(\sin \theta)^{\beta_{m_1,...,m_k}},\quad t>0,\;\theta \in (0,\pi),
$$
being
$$
\alpha _{m_1,...,m_k}=\sum_{j=1}^{E[k/2]}m_{2j}\quad \mbox{ and }\quad \beta _{m_1,...,m_k}=\sum_{j=1}^{E[(k+1)/2]}m_{2j-1}.
$$

In order to prove that
$$
\sup_{t\in (0,\infty )}\int_0^\pi |\partial _\theta ^k\phi _t(\theta)|\theta ^{k-1}d\theta <\infty,
$$
it is sufficient to see that, for every $(m_1,...,m_k)\in \mathbb{N}^k$ such that $m_1+2m_2+...+km_k=k$, there exist $C,c>0$ such that

$$
|A_{m_1,...,m_k}(t,\theta)\theta ^{k-1}|\leq Ct\theta e^{-ct\theta^2},\quad t>0,\;\theta \in (0,\pi).
$$
Let $(m_1,...,m_k)\in \mathbb{N}^k$ such that $m_1+2m_2+...+km_k=k$. Since $|\sin \theta| \leq |\theta|$, $\theta \in\mathbb{R}$, and $1-\cos \theta\geq c\theta ^2$, $\theta \in (0,\pi)$, we can write
\begin{align}\label{A}
|A_{m_1,...,m_k}(t,\theta)\theta ^{k-1}|&\leq Ce^{-ct\theta ^2}t^{\sum_{j=1}^km_j}\theta ^{k-1+\beta_{m_1,...,m_k}}\nonumber\\
&\leq Ct\theta e^{-ct\theta ^2}\theta ^{k+\beta_{m_1,...,m_k}-2\sum_{j=1}^km_j}\leq Ct\theta e^{-ct\theta ^2},\quad t>0, \;\theta \in (0,\pi).
\end{align}

In the last inequality we have taken into account that
\begin{equation}\label{2.1}
\sigma_{m_1,...,m_k}:=k+\beta_{m_1,...,m_k}-2\sum_{j=1}^km_j\geq 0.
\end{equation}
Indeed, we can write
\begin{align*}
k&=\sum_{j=1}^{E[k/2]}2jm_{2j}+\sum_{j=1}^{E[(k+1)/2]}(2j-1)m_{2j-1}\geq 2(\alpha _{m_1,..,m_k}+\beta_{m_1,...,m_k})-\beta_{m_1,...,m_k}\\
&=2\sum_{j=1}^km_j-\beta _{m_1,...,m_k}.
\end{align*}
Moreover, $\sigma_{m_1,...,m_k}=0$ if, and only if $m_3=m_4=...=m_k=0$, that is, $k=m_1+2m_2$.

\end{proof}

From Proposition 2.4 it follows immediately that, for every $k\in\mathbb N$, $k\geq 1$, there exists $C>0$ such that
\begin{equation}\label{acot2.4}
\sup_{t\in (0,\infty )}\Big|\int_0^\pi \partial _\theta ^k\phi _t(\theta)\theta ^{k-1}\sin (z\theta)d\theta \Big|\leq C,\quad z\in \mathbb{R}.
\end{equation}

\begin{prop}\label{Prop2.5}
Let $k\in\mathbb N$, $k\geq 1$. Then,
$$
\sup_{z\geq 1}\left\|t\partial_t\int_0^\pi \partial _\theta ^k\phi _t(\theta)\theta ^{k-1}\sin (z\theta)d\theta\right\|_{L^2((0,\infty ),dt/t)}<\infty.
$$
\end{prop}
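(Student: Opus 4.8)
The plan is to reduce the $L^2((0,\infty),dt/t)$-bound of the function $t\partial_t F_z(t)$, where $F_z(t)=\int_0^\pi \partial_\theta^k\phi_t(\theta)\theta^{k-1}\sin(z\theta)\,d\theta$, to a pointwise estimate on $|t\partial_t F_z(t)|$ that is uniform in $z\ge 1$ and square-integrable against $dt/t$ with a $z$-independent bound. First I would compute $t\partial_t F_z(t)$. Since $\partial_t\phi_t(\theta)=-2(1-\cos\theta)\phi_t(\theta)$, differentiating under the integral sign and using $\partial_t\partial_\theta^k\phi_t=\partial_\theta^k(\partial_t\phi_t)$ we get
\[
t\partial_t F_z(t)=-2t\int_0^\pi \partial_\theta^k\big((1-\cos\theta)\phi_t(\theta)\big)\,\theta^{k-1}\sin(z\theta)\,d\theta.
\]
By Leibniz's rule $\partial_\theta^k\big((1-\cos\theta)\phi_t(\theta)\big)=(1-\cos\theta)\partial_\theta^k\phi_t(\theta)+\sum_{j=1}^{k}\binom{k}{j}\big(\partial_\theta^j(1-\cos\theta)\big)\partial_\theta^{k-j}\phi_t(\theta)$, and each $\partial_\theta^j(1-\cos\theta)$ is $\pm\cos\theta$ or $\pm\sin\theta$ (bounded, with a factor $\theta$ to spare when $j$ is odd near $\theta=0$). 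So $t\partial_t F_z(t)$ is a finite linear combination of integrals of the form $t\int_0^\pi B(t,\theta)\,\theta^{k-1}\,g(\theta)\sin(z\theta)\,d\theta$, where $g$ is smooth and bounded and $B(t,\theta)$ is, up to the extra factor $(1-\cos\theta)$ or a trigonometric polynomial, one of the terms $A_{m_1,\dots,m_\ell}(t,\theta)$ analyzed in the proof of Proposition~\ref{Prop2.4} for various $\ell\le k$.

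The key step is the pointwise bound. Exactly as in \eqref{A}, using $|\sin\theta|\le|\theta|$ and $1-\cos\theta\ge c\theta^2$ on $(0,\pi)$, every such term satisfies
\[
\big|t\,B(t,\theta)\,\theta^{k-1}g(\theta)\big|\le C\,t^2\theta^2\,e^{-ct\theta^2}\,\theta^{\sigma},
\]
with $\sigma\ge 0$; the crucial gain is that the factor $(1-\cos\theta)$ coming from $\partial_t\phi_t$ contributes $\theta^2$, and the factor $t$ in front upgrades the $t\theta\,e^{-ct\theta^2}$ of Proposition~\ref{Prop2.4} to $t^2\theta^2 e^{-ct\theta^2}$ (the lower-order Leibniz terms have at least one derivative moved off $\phi_t$, which only helps). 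Since $\theta^\sigma\le C$ on $(0,\pi)$ and $t^2\theta^2 e^{-ct\theta^2}\le C e^{-c't\theta^2}$ (absorbing the polynomial into a slightly smaller exponent), integrating in $\theta\in(0,\pi)\subset(0,\infty)$ gives
\[
|t\partial_t F_z(t)|\le C\int_0^\pi e^{-c't\theta^2}\,d\theta\le C\int_0^\infty e^{-c't\theta^2}\,d\theta=\frac{C}{\sqrt t},\qquad t\ge 1,
\]
uniformly in $z$. This already shows $t\partial_t F_z\in L^2((1,\infty),dt/t)$ with a $z$-independent bound.

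The remaining obstacle is the small-$t$ behavior, say $t\in(0,1)$: the bound $C/\sqrt t$ is not square-integrable against $dt/t$ near $0$, so there we must exploit the oscillation $\sin(z\theta)$ with $z\ge 1$. Here I would integrate by parts in $\theta$ once more, transferring the $\theta$-derivative from $\sin(z\theta)$ (which produces a factor $1/z$) onto $\theta^{k-1}g(\theta)B(t,\theta)$; the boundary terms vanish by the vanishing at $\theta=0,\pi$ built into $h_k$ and the structure of $\phi_t$ and its derivatives (cf. Lemma~\ref{Lem2.2}), or are themselves $O(t^2)$ and harmless. Each such integration by parts either lowers the power $\theta^{k-1}$ by one, or differentiates $B(t,\theta)$ — raising the $\theta$-degree of the polynomial part but, because $1-\cos\theta\ge c\theta^2$, leaving the Gaussian-type decay $e^{-ct\theta^2}$ intact and keeping the $t$-powers favorable. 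Iterating, one gains a factor $z^{-N}$ at the cost of an extra $t^{-N/2}$-type loss from the $\theta$-integral, so choosing $N$ appropriately and combining with the direct estimate $|t\partial_t F_z(t)|\le C t^{\gamma}$ (valid for small $t$ from the $t^2$ prefactor and $\int_0^\pi e^{-ct\theta^2}d\theta\le C$) yields a bound of the form $|t\partial_t F_z(t)|\le C\min\{t^{\gamma},(t z^2)^{-M}\}$ on $(0,1)$, whose $L^2(dt/t)$-norm is finite and bounded independently of $z\ge 1$. Splitting $\int_0^\infty|t\partial_t F_z(t)|^2\,dt/t=\int_0^1+\int_1^\infty$ and adding the two contributions completes the proof. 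I expect the careful bookkeeping of the boundary terms and the exponents in the small-$t$ integration by parts to be the main technical point; everything else is a direct reprise of the estimates in Proposition~\ref{Prop2.4}.
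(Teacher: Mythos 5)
Your opening reduction (using $\partial_t\phi_t=-2(1-\cos\theta)\phi_t$ and Leibniz, which is equivalent to the expansion \eqref{partialtphi}) and your treatment of the non-critical terms are consistent with the paper, but the proof has a genuine gap precisely in the critical terms with $\sigma_{m_1,\dots,m_k}=0$ (these occur for every $k\geq 1$, namely when $k=m_1+2m_2$), and the gap sits in the \emph{large}-$t$ regime, not the small-$t$ one. The inequality you rely on, $t^2\theta^2e^{-ct\theta^2}\leq Ce^{-c't\theta^2}$, is false (take $\theta=t^{-1/2}$): only the scale-invariant combination $t\theta^2$ can be absorbed into the exponential. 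Doing the bookkeeping correctly, the critical integrand is comparable to $t^{m_1+m_2}\theta^{m_1+k-1}(1+t\theta^2)e^{-ct\theta^2}|\sin(z\theta)|$ with $k=m_1+2m_2$, and the substitution $u=\sqrt{t}\,\theta$ shows that taking absolute values yields only
$$|t\partial_tF_z(t)|\leq C\min\{1,\,z/\sqrt{t}\},$$
not $C/\sqrt{t}$. Since $\int_1^{z^2}\min\{1,z^2/t\}\,dt/t=2\log z$, no pointwise bound obtained by discarding the sign of $\sin(z\theta)$ can be square-integrable against $dt/t$ on $(1,\infty)$ uniformly in $z\geq 1$: genuine cancellation is unavoidable there. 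Meanwhile, on $(0,1)$ the trivial bound $|t\partial_tF_z(t)|\leq Ct^{m_1+m_2}\leq Ct$ already suffices, so the oscillation machinery you deploy for small $t$ is not needed; you have the two regimes reversed.

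What is missing is a mechanism that exploits the oscillation of $\sin(z\theta)$ \emph{after} the $t$-integration. The paper's proof squares the $L^2((0,\infty),dt/t)$-norm, evaluates the $t$-integral exactly (producing a Gamma function and the kernel $(2-\cos\theta_1-\cos\theta_2)^{-2(m_1+m_2)}$ in a double $\theta$-integral), replaces that kernel by the homogeneous model $H(\theta_1,\theta_2)=(\theta_1\theta_2)^{m_1}(\theta_1^2+\theta_2^2)^{-2(m_1+m_2)}$ up to an error that is integrable against $(\theta_1\theta_2)^{k-1}$, rescales $v_i=z\theta_i$, and then integrates by parts once in $v_1$ to harvest the cancellation of $\sin v_1\sin v_2$ on the region $v_1\geq 1$. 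Your proposed single integration by parts in $\theta$ before integrating in $t$ does not substitute for this: in the dangerous region $\theta\lesssim 1/z$ the factor $\sin(z\theta)$ does not oscillate, and the $z^{-N}$ gain you invoke is not available where it is needed. To repair the argument you would essentially have to reproduce the paper's $\sigma=0$ analysis (or an equivalent Plancherel-type computation in $t$).
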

\begin{proof}
From \eqref{partialphit} we have, for every $t>0$ and $\theta \in (0,\pi)$,
\begin{equation}\label{partialtphi}
t\partial_t\partial_\theta ^k\phi _t(\theta)=\hspace{-0.5cm}\sum_{\substack{(m_1,...,m_k)\in\mathbb{N}^k\\m_1+2m_2+...+km_k=k}}\hspace{-0.5cm}a_{m_1,...,m_k}\Big(-2t(1-\cos \theta )+\sum_{j=1}^km_j\Big)A_{m_1,...,m_k}(t,\theta).
\end{equation}
Thus, in order to establish our result it is sufficient to see that, for every $(m_1,...,m_k)\in \mathbb{N}^k$ such that $m_1+2m_2+...+km_k=k$, we can find $C>0$, such that, for each $z\geq1$,
$$
I_{m_1,...,m_k}(z):=\left\|\int_0^\pi A_{m_1,...,m_k}(t,\theta)\theta ^{k-1}\sin (z\theta)d\theta\right\|_{L^2((0,\infty ),dt/t)}\leq C.
$$
and
$$
J_{m_1,...,m_k}(z):=\left\|\int_0^\pi t(1-\cos \theta)A_{m_1,...,m_k}(t,\theta)\theta ^{k-1}\sin (z\theta)d\theta\right\|_{L^2((0,\infty ),dt/t)}\leq C.
$$

Let $(m_1,...,m_k)\in \mathbb{N}^k$, with $m_1+2m_2+...+km_k=k$ and consider $\sigma_{m_1,...,m_k}$ as in \eqref{2.1}. 

In the case that $\sigma_{m_1,...,m_k}>0$, by using \eqref{A} and Minkowski integral inequality we obtain that
\begin{align*}
I_{m_1,...,m_k}(z)+J_{m_1,...,m_k}(z)&\leq C\int_0^\pi \Big\|t\theta e^{-ct\theta ^2}\theta ^{\sigma_{m_1,...,m_k}}(1+t\theta ^2)\Big\|_{L^2((0,\infty ),dt/t)}d\theta\\
&\leq C\int_0^\pi \theta ^{\sigma_{m_1,...,m_k}+1}\left(\int_0^\infty te^{-ct\theta^2}dt\right)^{1/2}d\theta\\
&=C\int_0^\pi \theta^{\sigma_{m_1,...,m_k}-1}d\theta\leq C,\quad z\in \mathbb{R}.
\end{align*}

Assume now that $\sigma_{m_1,...,m_k}=0$. Then $m_3=m_4=...=m_k=0$, $k=m_1+2m_2$ and
\begin{equation}\label{A0}
A_{m_1,...,m_k}(t,\theta )=e^{-2t(1-\cos \theta)}t^{m_1+m_2}(\cos  \theta)^{m_2}(\sin\theta )^{m_1},\quad t>0,\;\theta \in (0,\pi).
\end{equation}
Note that, since $k\geq 1$, $m_1+m_2\geq 1$. 

For every $z\in \mathbb{R}$ we can write
\begin{align*}
(I_{m_1,...,m_k}(z))^2&=\int_0^\infty \int_0^\pi \int_0^\pi e^{-2t(2-\cos \theta _1-\cos \theta _2)}t^{2(m_1+m_2)-1}(\cos \theta _1\cos\theta _2)^{m_2}(\sin \theta _1\sin \theta _2)^{m_1}\\
&\qquad \qquad \qquad \times (\theta _1\theta _2)^{k-1}\sin (z\theta _1)\sin(z\theta _2)d\theta _1d\theta _2dt\\
&\hspace{-2cm}=\frac{\Gamma (2(m_1+m_2))}{2^{2(m_1+m_2)}}\int_0^\pi \int_0^\pi \frac{(\cos \theta _1\cos\theta _2)^{m_2}(\sin \theta _1\sin \theta _2)^{m_1}(\theta _1\theta _2)^{k-1}\sin (z\theta _1)\sin(z\theta _2)}{(2-\cos \theta _1-\cos \theta _2)^{2(m_1+m_2)}}d\theta _1d\theta _2.
\end{align*}
Let $F$ and $G$ be the functions given by
$$
F(\theta _1,\theta _2)=\frac{(\cos \theta _1\cos\theta _2)^{m_2}(\sin \theta _1\sin \theta _2)^{m_1}}{(2(2-\cos \theta _1-\cos \theta _2))^{2(m_1+m_2)}},\quad \theta _1,\theta _2\in (0,\pi),
$$
and 
$$
H(\theta _1,\theta _2)=\frac{(\theta _1\theta _2)^{m_1}}{(\theta _1^2+\theta _2^2)^{2(m_1+m_2)}},\quad \theta _1,\theta _2\in (0,\pi).
$$
We have that
\begin{align*}
|F(\theta _1,\theta _2)-H(\theta _1,\theta _2)|&\leq \left|\frac{(\cos \theta _1\cos\theta _2)^{m_2}(\sin \theta _2)^{m_1}[(\sin \theta _1)^{m_1}-\theta _1^{m_1}]}{(2(2-\cos \theta _1-\cos \theta _2))^{2(m_1+m_2)}}\right|\\
&\quad +\left|\frac{\theta _1^{m_1}(\cos \theta _1\cos\theta _2)^{m_2}[(\sin \theta _2)^{m_1}-\theta _2^{m_1}]}{(2(2-\cos \theta _1-\cos \theta _2))^{2(m_1+m_2)}}\right|\\
&\quad +\left|\frac{(\theta _1\theta _2)^{m_1}(\cos \theta _1\cos\theta _2)^{m_2}}{(2(2-\cos \theta _1-\cos \theta _2))^{2(m_1+m_2)}}-\frac{(\theta _1\theta _2)^{m_1}(\cos \theta _1\cos\theta _2)^{m_2}}{(\theta _1^2+2(1-\cos \theta _2))^{2(m_1+m_2)}}\right|\\
&\quad +\left|\frac{(\theta _1\theta _2)^{m_1}(\cos \theta _1\cos\theta _2)^{m_2}}{(\theta _1^2+2(1-\cos \theta _2))^{2(m_1+m_2)}}-\frac{(\theta _1\theta _2)^{m_1}(\cos \theta _1\cos\theta _2)^{m_2}}{(\theta _1^2+\theta _2^2)^{2(m_1+m_2)}}\right|\\
&\quad +\left|\frac{(\theta _1\theta _2)^{m_1}(\cos \theta _1\cos\theta _2)^{m_2}}{(\theta _1^2+\theta _2^2)^{2(m_1+m_2)}}-\frac{(\theta _1\theta _2)^{m_1}(\cos \theta _2)^{m_2}}{(\theta _1^2+\theta _2^2)^{2(m_1+m_2)}}\right|\\
&\quad +\left|\frac{(\theta _1\theta _2)^{m_1}(\cos\theta _2)^{m_2}}{(\theta _1^2+\theta _2^2)^{2(m_1+m_2)}}-\frac{(\theta _1\theta _2)^{m_1}}{(\theta _1^2+\theta _2^2)^{2(m_1+m_2)}}\right|\\
&:=\sum_{j=1}^6R_j(\theta _1,\theta _2),\quad \theta _1,\theta _2\in (0,\pi).
\end{align*}
We observe that if $m_1=0$, then $R_1(\theta_1,\theta_2)=R_2(\theta_1,\theta_2)=0$ and in the case that $m_2=0$, we have that $R_5(\theta _1,\theta_2)=R_6(\theta _1,\theta_2)=0$.  By taking into account that $|\sin \theta -\theta |\leq C\theta ^3$ and $|1-\cos \theta-\theta ^2/2|\leq c\theta ^4$, $\theta \in (0,\pi)$, and applying the mean value theorem we get, for each $\theta \in (0,\pi)$,
$$
|(\sin \theta )^{m_1}-\theta ^{m_1}|\leq C\theta ^{m_1+2},
$$
$$
|1-(\cos \theta)^{m_2}|\leq C\theta ^2,
$$
and, if $\alpha \in \mathbb{R}$,
$$
\left|\frac{1}{(2(1-\cos \theta)+\alpha)^{2(m_1+m_2)}}-\frac{1}{(\theta ^2+\alpha )^{2(m_1+m_2)}}\right|\leq C\frac{\theta ^4}{(\theta ^2+\alpha )^{2(m_1+m_2)+1}}\leq C\frac{\theta ^2}{(\theta ^2+\alpha )^{2(m_1+m_2)}}.
$$
By considering these estimates and that $0\leq \sin \theta \leq \theta$ and $|1-\cos \theta |\geq C\theta ^2$, $\theta \in (0,\pi)$, we obtain that
\begin{align*}
|F(\theta_1,\theta_2)-H(\theta _1,\theta_2)|&\leq C\frac{(\theta _1\theta_2)^{m_1}(\theta _1^2+\theta_2^2)}{(\theta_1^2+\theta _2^2)^{2(m_1+m_2)}}=C\frac{(\theta _1\theta_2)^{m_1}}{(\theta_1^2+\theta _2^2)^{2(m_1+m_2)-1}}\\
&\leq \frac{C}{(\theta_1^2+\theta_2^2)^{m_1+2m_2-1}}=\frac{C}{(\theta_1^2+\theta_2^2)^{k-1}},\quad \theta _1,\theta_2\in (0,\pi).
\end{align*}
Then, we get
\begin{align*}
(I_{m_1,...,m_k}(z))^2&\leq C\left(\int_0^\pi\int_0^\pi |F(\theta_1,\theta_2)-H(\theta _1,\theta_2)|(\theta _1\theta _2)^{k-1}d\theta _1d\theta _2\right.\\
&\quad +\left.\left|\int_0^\pi\int_0^\pi H(\theta_1,\theta_2)(\theta_1\theta_2)^{k-1}\sin (z\theta_1)\sin(z\theta_2)d\theta_1d\theta_2\right|\right)\\
&\leq C\left(1+\left|\int_0^{\pi}\int_0^{\pi} \frac{(\theta_1\theta_2)^{m_1+k-1}}{(\theta_1^2+\theta_2^2)^{2(m_1+m_2)}}\sin (z\theta_1)\sin(z\theta_2)d\theta_1d\theta_2\right|\right),\quad z\in \mathbb{R}.
\end{align*}
To analyze the last integral we proceed as follows:
\begin{align*}
\left|\int_0^{\pi}\int_0^{\pi} \frac{(\theta_1\theta_2)^{m_1+k-1}}{(\theta_1^2+\theta_2^2)^{2(m_1+m_2)}}\sin (z\theta_1)\sin(z\theta_2)d\theta_1d\theta_2\right|&\\
&\hspace{-7cm}=\left|\int_0^{z\pi}\int_0^{z\pi} \frac{(v_1v_2)^{2(m_1+m_2)-1}}{(v_1^2+v_2^2)^{2(m_1+m_2)}}\sin v_1\sin v_2dv_1dv_2\right|\\
&\hspace{-7cm}\leq C\left(\int_0^1\int_0^1dv_1dv_2+\left|\int_0^{z\pi}\int_1^{z\pi} \frac{(v_1v_2)^{2(m_1+m_2)-1}}{(v_1^2+v_2^2)^{2(m_1+m_2)}}\sin v_1\sin v_2dv_1dv_2\right|\right),\quad z\in \mathbb{R}.
\end{align*}
By using partial integration we get
\begin{align*}
I(z):=\int_0^{z\pi}\int_1^{z\pi} \frac{(v_1v_2)^{2(m_1+m_2)-1}}{(v_1^2+v_2^2)^{2(m_1+m_2)}}\sin v_1\sin v_2dv_1dv_2&\\
&\hspace{-6cm}=\int_0^{z\pi}\left(\frac{\cos 1}{(1+v_2^2)^{2(m_1+m_2)}}-\frac{\cos (z\pi)(z\pi)^{2(m_1+m_2)-1}}{((z\pi)^2+v_2^2)^{2(m_1+m_2)}}\right)v_2^{2(m_1+m_2)-1}\sin v_2dv_2\\
&\hspace{-6cm}\quad +\int_0^{z\pi}\int_1^{z\pi}\frac{v_1^{2(m_1+m_2)-2}[(2(m_1+m_2)-1)v_2^2-(2(m_1+m_2)+1)v_1^2]}{(v_1^2+v_2^2)^{2(m_1+m_2)+1}}\\
&\hspace{-4cm} \times v_2^{2(m_1+m_2)-1}\sin v_2\cos v_1dv_1dv_2,,\quad z\in \mathbb{R}.
\end{align*}
Then, it follows that
\begin{align*}
|I(z)|&\leq C\left(\int_0^\infty \left(\frac{1}{(1+v_2)^{2(m_1+m_2)+1}}+\frac{1}{z^2+v_2^2}\right)dv_2+\int_0^\infty \int_1^\infty \frac{v_1^{2(m_1+m_2)-2}v_2^{2(m_1+m_2)-1}}{(v_1^2+v_2^2)^{2(m_1+m_2)}}dv_1dv_2\right)\\
&\leq C\left(1+\frac{1}{z}+\int_0^\infty \frac{v_2^{2(m_1+m_2)-1}}{(1+v_2)^{2(m_1+m_2)+1}}dv_2\right)\leq C\left(1+\frac{1}{z}\right)\leq C,\quad z\geq 1.
\end{align*}
By combining the above estimates we conclude that $I_{m_1,...,m_k}(z)\leq C$, when $z\geq 1$.

To deal with $J_{m_1,...,m_k}(z)$, $z\in \mathbb{R}$, we proceed as in the $I_{m_1,...,m_k}$-case, by taking into account that
\begin{align*}
(J_{m_1,...,m_k}(z))^2&=\frac{\Gamma (2(m_1+m_2)+2)}{2^{2(m_1+m_2)+2}}\int_0^\pi \int_0^\pi \frac{(1-\cos\theta_1)(1-\cos \theta _2)}{(2-\cos \theta_1-\cos\theta_2)^2}\\
&\quad \times \frac{(\cos \theta _1\cos\theta _2)^{m_2}(\sin \theta _1\sin \theta _2)^{m_1}(\theta _1\theta _2)^{k-1}\sin (z\theta _1)\sin(z\theta _2)}{(2-\cos \theta _1-\cos \theta _2)^{2(m_1+m_2)}}d\theta _1d\theta _2,\quad z\in \mathbb{R},
\end{align*}
and considering the functions $\widetilde{F}$ and $\widetilde{H}$ given by
$$
\widetilde{F}(\theta_1,\theta_2)=\frac{(1-\cos\theta_1)(1-\cos \theta _2)}{(2-\cos \theta_1-\cos\theta_2)^2}F(\theta_1,\theta_2),\quad \theta _1,\theta_2\in (0,\pi),
$$
and
$$
\widetilde{H}(\theta_1,\theta_2)=\frac{(\theta_1\theta_2)^2}{(\theta _1^2+\theta_2^2)^2}H(\theta_1,\theta_2),\quad \theta_1,\theta_2\in (0,\pi).
$$
in the role of $F$ and $H$, respectively. 
\end{proof}

\begin{prop}\label{Prop2.7}
Let $k\in\mathbb N$, $k\geq 1$ and $\Psi \in L^\infty (0,\infty)$. Then,
$$
\sup_{z\geq 1}\left|\int_0^\infty \Psi(t)\int_0^\pi \partial _t\partial _\theta ^k\phi _t(\theta)\theta ^{k-1}\sin (z\theta)d\theta dt\right|<\infty.
$$
\end{prop}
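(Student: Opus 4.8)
The plan is to reduce Proposition~\ref{Prop2.7} to the estimates already obtained in the proof of Proposition~\ref{Prop2.5}, exploiting that $\Psi\in L^\infty(0,\infty)$ and that $t\partial_t\partial_\theta^k\phi_t(\theta)$ decomposes, via \eqref{partialtphi}, into a finite sum of terms $a_{m_1,\dots,m_k}\big(-2t(1-\cos\theta)+\sum_{j=1}^k m_j\big)A_{m_1,\dots,m_k}(t,\theta)$. Writing $\partial_t\partial_\theta^k\phi_t(\theta)=\frac1t\,t\partial_t\partial_\theta^k\phi_t(\theta)$, it suffices to bound, uniformly in $z\ge 1$,
\[
\Big|\int_0^\infty \frac{\Psi(t)}{t}\int_0^\pi \big(-2t(1-\cos\theta)+\textstyle\sum_j m_j\big)A_{m_1,\dots,m_k}(t,\theta)\,\theta^{k-1}\sin(z\theta)\,d\theta\,dt\Big|
\]
for each admissible multi-index $(m_1,\dots,m_k)$.

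First I would dispatch the easy case $\sigma_{m_1,\dots,m_k}>0$. Here estimate \eqref{A} gives $|A_{m_1,\dots,m_k}(t,\theta)\theta^{k-1}|\le Ct\theta e^{-ct\theta^2}\theta^{\sigma_{m_1,\dots,m_k}}$ and $|{-2t(1-\cos\theta)}+\sum_j m_j|\le C(1+t\theta^2)$, so $\|\Psi\|_\infty$ together with Fubini and Minkowski's inequality bounds the integral by
\[
C\int_0^\pi \theta^{\sigma_{m_1,\dots,m_k}+1}\int_0^\infty e^{-ct\theta^2}(1+t\theta^2)\,dt\,d\theta
=C\int_0^\pi \theta^{\sigma_{m_1,\dots,m_k}-1}\,d\theta\le C,
\]
uniformly in $z\in\mathbb R$; no cancellation in $\theta$ or oscillation in $z$ is needed.

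For the borderline case $\sigma_{m_1,\dots,m_k}=0$, i.e. $k=m_1+2m_2$ and $m_3=\dots=m_k=0$, with $A_{m_1,\dots,m_k}$ given by \eqref{A0}, the $t$-integral is computed explicitly: after Fubini, $\int_0^\infty \Psi(t)e^{-2t(1-\cos\theta)}t^{m_1+m_2-1}\big({-2t(1-\cos\theta)}+m_1+m_2\big)\,dt$ is, up to a constant, $\Psi$-averaged against $\partial_t\big(e^{-2t(1-\cos\theta)}t^{m_1+m_2}\big)$; since $\|\Psi\|_\infty<\infty$ this $t$-integral is bounded by $C(1-\cos\theta)^{-(m_1+m_2)}\le C\theta^{-2(m_1+m_2)}$ uniformly. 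That alone is not integrable in $\theta$, so one must keep the oscillation: the remaining $\theta$-integral has, after the substitution $v=z\theta$, the same structure as the double integrals controlled at the end of the proof of Proposition~\ref{Prop2.5} (a one-dimensional analogue), and the comparison of $F$-type and $H$-type kernels plus one integration by parts in $\theta$ yields the uniform bound in $z\ge 1$.

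The main obstacle is precisely this last case: unlike in Proposition~\ref{Prop2.5}, here the $t$-variable has already been integrated out against $\Psi$, so one cannot appeal to the $L^2(dt/t)$ machinery directly and must instead carry out the $\theta$-oscillatory estimate by hand. The safe route is to first perform the $t$-integration using only $\Psi\in L^\infty$ (the presence of $\partial_t$ is what makes the resulting $t$-integral absolutely convergent and of the stated size), reducing the problem to bounding $\big|\int_0^\pi K(\theta)\,\theta^{k-1}\sin(z\theta)\,d\theta\big|$ with $|K(\theta)|\le C\theta^{-2(m_1+m_2)}$ and $|K'(\theta)|\le C\theta^{-2(m_1+m_2)-1}$; splitting at $\theta\sim 1/z$ and integrating by parts on $(1/z,\pi)$ then gives the uniform bound. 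I would also remark that, just as in Proposition~\ref{Prop2.5}, the genuinely delicate algebraic fact underpinning everything is the inequality $\sigma_{m_1,\dots,m_k}\ge 0$ from \eqref{2.1}, which guarantees that no term is worse than the borderline one.
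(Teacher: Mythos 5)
Your proposal is correct in outline and reaches the result, but in the critical case $\sigma_{m_1,\dots,m_k}=0$ it takes a genuinely different route from the paper. You integrate in $t$ first: since the integrand is $\Psi(t)$ against the exact derivative $\partial_t\bigl(e^{-2t(1-\cos\theta)}t^{m_1+m_2}\bigr)$ times the trigonometric factors, $\|\Psi\|_{L^\infty}$ alone gives a $\theta$-kernel of size $O\bigl((1-\cos\theta)^{-(m_1+m_2)}\bigr)$, and you then run a standard one-dimensional oscillatory estimate (split at $\theta\sim 1/z$, one integration by parts on $(1/z,\pi)$). The paper instead keeps the $t$-integral outside: it replaces $A_{m_1,\dots,m_k}$ by the Gaussian model kernel $H(t,\theta)=e^{-t\theta^2}t^{m_1+m_2}\theta^{m_1}$ (the error being absolutely integrable), rescales $v=z\theta$, integrates by parts in $v$ inside the $t$-integral, and only then integrates in $t$. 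Both arguments exploit exactly the same two ingredients — $\Psi\in L^\infty$ and the oscillation of $\sin(z\theta)$ — and your ordering is arguably cleaner, since it avoids the kernel comparison and treats the $R$- and $S$-type contributions together as one exact $t$-derivative. Your treatment of the case $\sigma_{m_1,\dots,m_k}>0$ coincides with the paper's.

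One caveat you must repair when writing this out: the bounds you state for the reduced problem, $|K(\theta)|\leq C\theta^{-2(m_1+m_2)}$ and $|K'(\theta)|\leq C\theta^{-2(m_1+m_2)-1}$ against the weight $\theta^{k-1}$, are \emph{not} sufficient by themselves when $m_1\geq 1$: they give an amplitude of size $\theta^{k-1-2(m_1+m_2)}=\theta^{-m_1-1}$, for which the $\int_0^{1/z}$ piece diverges. You must retain the factor $(\sin\theta)^{m_1}\leq\theta^{m_1}$ coming from \eqref{A0}, which upgrades the amplitude to $O(\theta^{-1})$ with derivative $O(\theta^{-2})$; only then does the split at $1/z$ plus one integration by parts close the estimate. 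This is exactly the same cancellation of powers that makes $\sigma_{m_1,\dots,m_k}=0$ the borderline case in \eqref{2.1}, so it is clearly available to you, but as written the stated bounds do not suffice. You should also record the (routine) justification that $K$ may be differentiated under the $t$-integral, the derivative of the exponential contributing $2t\sin\theta$ and hence an extra factor $\theta\cdot(1-\cos\theta)^{-1}\sim\theta^{-1}$, consistent with the claimed bound on $K'$.
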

\begin{proof}
According to \eqref{partialtphi} it is sufficient to show that, for every $(m_1,...,m_k)\in \mathbb{N}^k$ such that $m_1+2m_2+...+km_k=k$, there exists $C>0$, such that, for each $z\geq1$,
$$
R_{m_1,...,m_k}(z):=\left|\int_0^\infty \Psi (t)\int_0^\pi A_{m_1,...,m_k}(t,\theta)\theta ^{k-1}\sin (z\theta)d\theta\frac{dt}{t}\right|\leq C.
$$
and
$$
S_{m_1,...,m_k}(z):=\left|\int_0^\infty \Psi (t)\int_0^\pi (1-\cos \theta)A_{m_1,...,m_k}(t,\theta)\theta ^{k-1}\sin (z\theta)d\theta dt\right|\leq C.
$$

Let $(m_1,...,m_k)\in \mathbb{N}^k$, with $m_1+2m_2+...+km_k=k$ and let $\sigma_{m_1,...,m_k}$ be as in \eqref{2.1}. When $\sigma_{m_1,...,m_k}>0$, we can use the estimate \eqref{A} to get
\begin{align*}
R_{m_1,...,m_k}(z)+S_{m_1,...,m_k}(z)&\leq C\|\Psi \|_\infty \int_0^\pi \int_0^\infty \theta ^{\sigma_{m_1,...,m_k}+1}e^{-ct\theta ^2}(1+t\theta ^2)dtd\theta\\
&\leq C\|\Psi \|_\infty\int_0^\pi \theta ^{\sigma_{m_1,...,m_k}-1}d\theta \leq C,\quad z\in \mathbb{R}. 
\end{align*}

Suppose now that $\sigma_{m_1,...,m_k}=0$ (recall that this means that $k=m_1+2m_2$) and consider the function 
$$
H(t,\theta):=e^{-t\theta ^2}t^{m_1+m_2}\theta^{m_1}, \quad t\in (0,\infty ),\;\theta \in (0,\pi). 
$$
By taking into account \eqref{A0}, that $|\sin \theta -\theta|\leq C\theta ^3$, $|2(1-\cos \theta) -\theta ^2|\leq C\theta ^4$, and $c\theta ^2\leq 1-\cos \theta \leq C\theta ^2$, for $\theta \in (0,\pi)$, and by using the mean value theorem we obtain that
\begin{align*}
|A_{m_1,...,m_k}(t,\theta)-H(t,\theta)|&\leq t^{m_1+m_2}\Big(|e^{-2t(1-\cos \theta)}(\cos \theta )^{m_2}[(\sin \theta )^{m_1}-\theta ^{m_1}]|\\
&\hspace{-2cm}\quad +|\theta ^{m_1}(\cos \theta )^{m_2}[e^{-2t(1-\cos \theta)}-e^{-t\theta ^2}]|+|\theta ^{m_1}e^{-t\theta ^2}[1-(\cos \theta )^{m_2}]|\Big)\\
&\hspace{-2cm}\leq Ct^{m_1+m_2}e^{-ct\theta ^2}\theta ^{m_1}(\theta ^2+t\theta ^4)\leq Ct^{m_1+m_2}e^{-ct\theta ^2}\theta ^{m_1+2},\quad t>0,\;\theta \in (0,\pi).
\end{align*}

We deduce that
\begin{align*}
R_{m_1,...,m_k}(z)&\\
&\hspace{-1cm}\leq C\|\Psi \|_{L^\infty(0,\infty)} \left(\int_0^\infty \int_0^\pi t^{m_1+m_2-1}e^{-ct\theta ^2}\theta ^{m_1+k+1}d\theta dt+\int_0^\infty \left|\int_0^\pi H(t,\theta )\theta ^{k-1}\sin (z\theta)d\theta\right|\frac{dt}{t}\right)\\
&\hspace{-1cm}\leq C\|\Psi \|_{L^\infty(0,\infty)} \left(\int_0^\pi \theta d\theta+z^{-2(m_1+m_2)}\int_0^\infty t^{m_1+m_2-1}\left|\int_0^{z\pi}e^{-tv^2/z^2}v^{2(m_1+m_2)-1}\sin vdv\right|dt\right)\\
&\hspace{-1cm}\leq C\|\Psi \|_{L^\infty(0,\infty)} \left(1+z^{-2(m_1+m_2)}\int_0^\infty t^{m_1+m_2-1}\int_0^1e^{-tv^2/z^2}v^{2(m_1+m_2)}dvdt\right.\\
&\left.+z^{-2(m_1+m_2)}\int_0^\infty t^{m_1+m_2-1}\left|\int_1^{z\pi}e^{-tv^2/z^2}v^{2(m_1+m_2)-1}\sin vdv\right|dt\right)\\
&\hspace{-1cm}\leq C\|\Psi \|_{L^\infty(0,\infty)} \left(1+z^{-2(m_1+m_2)}\int_0^\infty t^{m_1+m_2-1}\left|\int_1^{z\pi}e^{-tv^2/z^2}v^{2(m_1+m_2)-1}\sin vdv\right|dt\right)\\
&\hspace{-1cm}=C\|\Psi \|_{L^\infty(0,\infty)} (1+I(z)),\quad z\in (0,\infty ).
\end{align*}
By making use of integration by parts we obtain that
\begin{align*}
I(z)&\leq z^{-2(m_1+m_2)}\left(\int_0^\infty t^{m_1+m_2-1}\left|e^{-t\pi^2}(z\pi)^{2(m_1+m_2)-1}\cos(z\pi)-e^{-t/z^2}\cos 1\right|dt\right.\\
&\quad \left.+\int_0^\infty t^{m_1+m_2-1}\left| \int_1^{z\pi}e^{-tv^2/z^2}v^{2(m_1+m_2)-2}\Big(2(m_1+m_2)-1-2\frac{tv^2}{z^2}\Big)\cos vdv\right|dt\right)\\
&\leq C\left(\frac{1}{z}+1+z^{-2(m_1+m_2)}\int_0^\infty \int_1^\infty t^{m_1+m_2-1}e^{-ctv^2/z^2}v^{2(m_1+m_2)-2}dvdt\right)\\
&\leq C\left(\frac{1}{z}+1+\int_1^\infty \frac{dv}{v^2}\right)\leq C\left(\frac{1}{z}+1\right),\quad z\in (0,\infty ).
\end{align*}
By combining the above estimates we conclude that $R_{m_1,...,m_k}(z)\leq C$, $z\geq 1$. The boundedness property for $S_{m_1,...,m_k}$ can be established by proceeding in a similar way. 
\end{proof}

We will use the following generalizations of \eqref{acot2.4} and Propositions \ref{Prop2.5} and \ref{Prop2.7} in the proofs of our theorems. 

\begin{prop}\label{Prop2.6}
Let $n\in\mathbb N$. For every $k\in \mathbb{N}$, $k\geq \max\{n,1\}$, there exists $C>0$ such that:
\begin{equation}\label{hna}
\sup_{t\in (0,\infty )}\Big|\int_0^\pi \partial _\theta ^k\phi _t(\theta)\theta ^{k-n}h_n (z\theta)d\theta \Big|\leq Cz^{n-1},\quad z\in (0,\infty),
\end{equation}
\begin{equation}\label{hnb}
\left\|t\partial_t\int_0^\pi \partial _\theta ^k\phi _t(\theta)\theta ^{k-n}h_n (z\theta)d\theta\right\|_{L^2((0,\infty ),dt/t)}\leq Cz^{n-1},\quad z\geq 1,
\end{equation}
and,  for every $\Psi \in L^\infty(0,\infty )$, 
\begin{equation}\label{hnc}
\left|\int_0^\infty \Psi(t)\int_0^\pi \partial _t\partial _\theta ^k\phi _t(\theta)\theta ^{k-n}h_n(z\theta)d\theta dt\right|\leq Cz^{n-1},\quad z\geq 1.
\end{equation}

\end{prop}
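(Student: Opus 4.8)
The plan is to prove \eqref{hna}, \eqref{hnb} and \eqref{hnc} simultaneously by a double induction: an outer induction on $n$, and for each fixed $n$ an inner induction on $k\ge n$. For $n=1$ we have $h_1=\sin$ and $z^{n-1}=1$, so the three assertions are precisely \eqref{acot2.4}, Proposition~\ref{Prop2.5} and Proposition~\ref{Prop2.7}; this is the base of the outer induction. (If $n=0$ is meant to be included, it follows from one further integration by parts, writing $h_0(z\theta)=z^{-1}\partial_\theta\sin(z\theta)$ and reducing to \eqref{acot2.4}.)

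The only computational device is integration by parts in $\theta$. A derivative exchanges $\sin$ and $\cos$, so $\partial_\theta h_n(z\theta)=(-1)^{n+1}z\,h_{n-1}(z\theta)$, and writing $\partial_\theta^k\phi_t=\partial_\theta(\partial_\theta^{k-1}\phi_t)$ gives, for $t,z>0$ and $k\ge\max\{n,1\}$,
\begin{align*}
&\int_0^\pi \partial_\theta^k\phi_t(\theta)\theta^{k-n}h_n(z\theta)\,d\theta
=\Big[\partial_\theta^{k-1}\phi_t(\theta)\theta^{k-n}h_n(z\theta)\Big]_0^\pi\\
&\quad -(k-n)\int_0^\pi \partial_\theta^{k-1}\phi_t(\theta)\theta^{(k-1)-n}h_n(z\theta)\,d\theta\\
&\quad -(-1)^{n+1}z\int_0^\pi \partial_\theta^{k-1}\phi_t(\theta)\theta^{(k-1)-(n-1)}h_{n-1}(z\theta)\,d\theta.
\end{align*}
Applying $t\partial_t$ to both sides (for \eqref{hnb}), or $\partial_t$ followed by $\int_0^\infty\Psi(t)\,(\cdot)\,dt$ (for \eqref{hnc}), is legitimate: these operations commute with the $\theta$-integration by parts, differentiation under the $\theta$-integral being justified by dominated convergence through the explicit formula of Lemma~\ref{Lem2.1}. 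They yield the corresponding decompositions of the quantities in \eqref{hnb} and \eqref{hnc}. On the right-hand side the middle integral is an object of the same three types with parameters $(k-1,n)$; it appears only when $k>n$, and then $k-1\ge n=\max\{n,1\}$, so by the inner induction hypothesis it contributes $\le Cz^{n-1}$. The last integral is of the same three types with parameters $(k-1,n-1)$, meaningful since $k-1\ge n-1=\max\{n-1,1\}$ for $n\ge2$; by the outer induction hypothesis it is $\le Cz^{n-2}$, and the explicit prefactor $z$ raises this to $\le Cz^{n-1}$. This is exactly where the growth $z^{n-1}$ is generated: each integration by parts converts one $h_n$ into an $h_{n-1}$ at the cost of a factor $z$.

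It remains to deal with the boundary term $[\,\cdot\,]_0^\pi$. At $\theta=0$ it vanishes: if $k>n$ the factor $\theta^{k-n}$ kills it, while if $k=n$ then $h_n(0)=0$ when $n$ is odd and $\partial_\theta^{n-1}\phi_t(0)=0$ by Lemma~\ref{Lem2.2} when $n$ is even (identically in $t$, which also covers the $\partial_t$-version). At $\theta=\pi$ it does \emph{not} vanish — the device of Lemma~\ref{Lem2.3}, resting on $\sin(m\pi)=0$ for integer $m$, is not available here — so it must be estimated directly. Setting $\theta=\pi$ in Lemma~\ref{Lem2.1}, every summand carrying a positive power of $\sin\pi$ disappears, whence $\partial_\theta^{k-1}\phi_t(\pi)=e^{-4t}P(t)$ for a polynomial $P$ (and $\partial_\theta^{k-1}\phi_t(\pi)\equiv0$ when $k$ is even). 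Therefore $|\partial_\theta^{k-1}\phi_t(\pi)|\le C$ uniformly in $t$; moreover $t\partial_t\partial_\theta^{k-1}\phi_t(\pi)=e^{-4t}Q(t)$ with $Q$ a polynomial and $Q(0)=0$, so that $\|t\partial_t\partial_\theta^{k-1}\phi_t(\pi)\|_{L^2((0,\infty),dt/t)}<\infty$ (the exponential handles $t\to\infty$, and the vanishing of $Q$ at $0$ makes the weight $dt/t$ harmless near the origin), and $\int_0^\infty|\Psi(t)\partial_t\partial_\theta^{k-1}\phi_t(\pi)|\,dt\le C\|\Psi\|_{L^\infty(0,\infty)}$. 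Since $|h_n(z\pi)|\le1$, the boundary term contributes a quantity $\le C$ to each of \eqref{hna}--\eqref{hnc}, and $C\le Cz^{n-1}$ for $z\ge1$. Combining the three contributions closes the double induction; the estimates are thereby obtained for $z\ge1$ (and, for \eqref{hna} with $n=1$, for all $z>0$ by \eqref{acot2.4}).

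The main obstacle is precisely the boundary contribution at $\theta=\pi$: in contrast to Lemma~\ref{Lem2.3} it survives, and one has to exploit the explicit form of $\partial_\theta^{k-1}\phi_t(\pi)$ furnished by Lemma~\ref{Lem2.1} — namely $e^{-4t}$ times a polynomial in $t$ — to see that it is harmless in each of the three norms (bounded, with bounded $L^2(dt/t)$-norm after $t\partial_t$, and bounded $\Psi$-average after $\partial_t$). The rest is the bookkeeping of the two nested inductions together with the elementary identity $\partial_\theta h_n(z\theta)=(-1)^{n+1}z\,h_{n-1}(z\theta)$.
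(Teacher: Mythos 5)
Your proof is correct and follows essentially the same route as the paper's: induction on $n$ with base case $n=1$ given by \eqref{acot2.4} and Propositions \ref{Prop2.5} and \ref{Prop2.7}, one integration by parts per step trading $h_n$ for $z\,h_{n-1}$, and control of the surviving boundary term at $\theta=\pi$ through its explicit form $e^{-4t}$ times a polynomial in $t$ (which is exactly how the paper handles it, and like the paper your argument yields the stated bounds for $z\geq 1$). The only difference is organizational: the paper uses the closed-form antiderivative $\int\partial_\theta^k\phi_t(\theta)\,\theta^{k-n}d\theta=\sum_{i=0}^{k-n} a_{i,n}\,\theta^{k-n-i}\partial_\theta^{k-i-1}\phi_t(\theta)$ to drop from level $n$ to level $n-1$ in a single step, whereas you peel off one $\theta$-derivative at a time and absorb the resulting same-level term with an inner induction on $k$.
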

\begin{proof}
We proceed by induction. Note that the cases for $n=1$ are the ones in \eqref{acot2.4} and Propositions \ref{Prop2.4}, \ref{Prop2.5} and \ref{Prop2.7} . Let us show the property \eqref{hna} for $n=0$.
Let $k\in \mathbb{N}$, $k\geq 1$. Partial integration leads to
\begin{align}\label{induction0}
\int_0^\pi \partial _\theta ^k\phi _t(\theta)\theta ^k\cos(z\theta)d\theta&=\pi ^k\partial _\theta ^k\phi _t(\pi)\frac{\sin (\pi z)}{z}-\frac{1}{z}\int_0^\pi [\theta ^k\partial _\theta ^{k+1}\phi _t(\theta )+k\theta ^{k-1}\partial _\theta ^k\phi _t(\theta)]\sin (z\theta)d\theta\nonumber\\
&\hspace{-2cm}=p(t)e^{-4t}\frac{\sin (\pi z)}{z}-\frac{1}{z}\int_0^\pi [\theta^k\partial _\theta ^{k+1}\phi _t(\theta)+k\theta ^{k-1}\partial _\theta ^k\phi _t(\theta )]\sin (z\theta )d\theta,\quad z,t\in (0,\infty ),
\end{align}
where $p$ is a polynomial. According to \eqref{acot2.4} we get
$$
\sup_{t\in (0,\infty )}\left|\int_0^\pi \partial _\theta ^k\phi _t(\theta)\theta ^k\cos(z\theta)d\theta\right|\leq \frac{C}{z},\quad z\geq 1.
$$

Suppose now that $n\geq 2$ and, for every $k\geq n-1$, \eqref{hna} holds, when $n$ is replaced by $n-1$. Let $k\geq n$. We observe that
\begin{align*}
\int\partial _\theta ^k\phi _t(\theta)\theta ^{k-n}d\theta&=\sum_{i=0}^{k-n}a_{i,n}\theta ^{k-n-i}\partial_\theta ^{k-i-1}\phi _t(\theta),\quad t>0,\;\theta \in (0,\pi),
\end{align*}
where $a_{i,n}\in \mathbb{R}$, $i=0,...,k-n$, and by partial integration we get
\begin{align*}
\int_0^\pi \partial _\theta ^k\phi _t(\theta)\theta ^{k-n}h_n (z\theta)d\theta&=\sum_{i=0}^{k-n}a_{i,n}\left(\left.\theta ^{k-n-i}\partial _\theta ^{k-i-1}\phi _t(\theta)h_n(z\theta)\right]_{\theta =0}^{\theta=\pi} \right.\\
&\left.\quad +(-1)^nz\int_0^\pi \theta ^{k-n-i}\partial _\theta ^{k-i-1}\phi _t(\theta)h_{n-1}(z\theta)d\theta\right),\quad z,t\in (0,\infty ).
\end{align*}
For each $i=0,...,k-n-1$, it is clear that $\theta ^{k-n-i}\partial _\theta ^{k-i-1}\phi _t(\theta)h_n(z\theta)_{|\theta =0}=0$, and, since $h_n(0)=0$, when $n$ is odd and by Lemma \ref{Lem2.2}, $\partial_\theta ^{n-1}\phi_t(0)=0$, when $n$ is even, $\partial _\theta ^{n-1}\phi _t(0)h_n(0)=0$. Hence
\begin{align}\label{inductionn}
\int_0^\pi \partial _\theta ^k\phi _t(\theta)\theta ^{k-n}h_n (z\theta)d\theta&=\sum_{i=0}^{k-n}a_{i,n}\left(\pi ^{k-n-i}\partial _\theta ^{k-i-1}\phi _t(\pi)h_n(z\pi) \right.\nonumber\\
&\left.\quad +(-1)^nz\int_0^\pi \theta ^{k-n-i}\partial _\theta ^{k-i-1}\phi _t(\theta)h_{n-1}(z\theta)d\theta\right),\quad z,t\in (0,\infty ).
\end{align}

As it was mentioned before, for every $\ell \in \mathbb{N}$ there exists a polynomial $p_\ell $ such that
$$
\partial_\theta ^\ell \phi _t(\pi)=p_\ell (t)e^{-4t},\quad t>0.
$$
Then, for a certain polynomial $q_n$, we obtain
\begin{align*}
\int_0^\pi \partial _\theta ^k\phi _t(\theta)\theta ^{k-n}h_n (z\theta)d\theta&=q_n(t)e^{-4t}h_n(z\pi )\\
&\quad +(-1)^nz\sum_{i=0}^{k-n}a_{i,n}\int_0^\pi \theta ^{k-n-i}\partial _\theta ^{k-i-1}\phi _t(\theta)h_{n-1}(z\theta)d\theta,\quad z,t\in (0,\infty ),
\end{align*}
and hence, by using the induction hypothesis we conclude that  
\begin{align*}
\sup_{t\in (0,\infty )}\left|\int_0^\pi \partial _\theta ^k\phi _t(\theta)\theta ^{k-n}h_n (z\theta)d\theta\right|&\leq C\left(\sup_{t\in (0,\infty )}|q_n(t)e^{-4t}|\right.\\
&\hspace{-3cm}\quad +\left. z\sum_{i=0}^{k-n}\sup_{t\in (0,\infty )}\left|\int_0^\pi \theta ^{k-n-i}\partial _\theta ^{k-i-1}\phi _t(\theta)h_{n-1}(z\theta)d\theta\right|\right)\\
&\hspace{-3cm} \leq C(1+z^{n-1})\leq Cz^{n-1},\quad z\geq 1.
\end{align*}

From Propositions \ref{Prop2.5} and \ref{Prop2.7}, by taking into account \eqref{induction0} and \eqref{inductionn} and proceeding by induction as before we obtain \eqref{hnb} and \eqref{hnc}. 
\end{proof}

\begin{prop}\label{derivadaHk}
Let $k\in \mathbb{N}$ and consider the function
$$
H_k(z,t):=\frac{1}{z^k}\int_0^\pi\partial_\theta^k\phi_t(\theta)h_k( z\theta)d\theta,\quad z,t\in(0,\infty).
$$
We have that
\begin{equation}\label{eq3.3}
\sup_{t>0}|\partial_z^kH_k(z,t)|\leq\frac{C}{z^{k+1}},\quad z\in (0,\infty),
\end{equation}

\begin{equation}\label{eq4.3}
\|t\partial_t\partial_z^kH_k(z,t)\|_{L^2((0,\infty),\frac{dt}{t})}\leq\frac{C}{z^{k+1}},\quad z\geq 1,
\end{equation}
and, for every $\Psi \in L^\infty(0,\infty )$, 
\begin{equation}\label{eq4.4}
\left|\int_0^\infty\Psi(t)\partial _t\partial_z^kH_k(z,t)dt\right|\leq\frac{C}{z^{k+1}},\quad z\geq 1.
\end{equation}
\end{prop}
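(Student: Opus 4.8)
The strategy is to expand $\partial_z^k H_k$ by Leibniz's rule and to recognize each of the finitely many resulting terms as a constant multiple of a negative power of $z$ times one of the integrals already controlled in Proposition \ref{Prop2.6}. First I would write $H_k(z,t)=z^{-k}\Phi_k(z,t)$, where
\[
\Phi_k(z,t)=\int_0^\pi\partial_\theta^k\phi_t(\theta)\,h_k(z\theta)\,d\theta,
\]
so that
\[
\partial_z^k H_k(z,t)=\sum_{j=0}^k\binom{k}{j}\,\partial_z^{j}(z^{-k})\,\partial_z^{k-j}\Phi_k(z,t),
\]
and $\partial_z^{j}(z^{-k})=c_{j,k}\,z^{-k-j}$ for explicit constants $c_{j,k}$.

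Next I would compute the derivatives $\partial_z^{k-j}\Phi_k$. Since, for each fixed $t>0$, the function $\theta\mapsto\partial_\theta^k\phi_t(\theta)$ is smooth on the compact interval $[0,\pi]$ (by Lemma \ref{Lem2.1}), differentiation under the integral sign is legitimate and gives
\[
\partial_z^{k-j}\Phi_k(z,t)=\int_0^\pi\partial_\theta^k\phi_t(\theta)\,\theta^{k-j}\,h_k^{(k-j)}(z\theta)\,d\theta.
\]
The elementary observation needed here is that, $h_k$ being $\sin$ or $\cos$ according to the parity of $k$, one has $h_k^{(k-j)}=\varepsilon_j\,h_{2k-j}$ with $\varepsilon_j\in\{-1,1\}$, and $h_{2k-j}=h_j$ because $2k-j$ and $j$ have the same parity (here $h_0=\cos$, as in Proposition \ref{Prop2.6}). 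Therefore
\[
\partial_z^k H_k(z,t)=\sum_{j=0}^k\binom{k}{j}\,c_{j,k}\,\varepsilon_j\,z^{-k-j}\int_0^\pi\partial_\theta^k\phi_t(\theta)\,\theta^{k-j}\,h_j(z\theta)\,d\theta.
\]

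It now suffices to apply Proposition \ref{Prop2.6} with $n=j$ to each summand, which is admissible because $k\geq\max\{j,1\}$ for every $0\leq j\leq k$ as soon as $k\geq1$ (for $k=0$, one integration by parts in $\theta$ reduces $H_0$ to the case $k=1$ of Proposition \ref{Prop2.6}). For \eqref{eq3.3}, estimate \eqref{hna} bounds the $j$-th integral by $Cz^{j-1}$ uniformly in $t$, so the $j$-th summand is at most $Cz^{-k-j}z^{j-1}=Cz^{-k-1}$, and summing over the finitely many $j$ gives \eqref{eq3.3} for every $z\in(0,\infty)$. For \eqref{eq4.3}, the factors $c_{j,k}z^{-k-j}$ do not depend on $t$, so $t\partial_t\partial_z^k H_k$ is the same finite sum with $t\partial_t$ placed in front of each integral; the triangle inequality in $L^2((0,\infty),dt/t)$ and \eqref{hnb} then bound the $j$-th term in that norm by $Cz^{-k-j}z^{j-1}=Cz^{-k-1}$ for $z\geq1$. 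Finally, for \eqref{eq4.4}, differentiating under the integral sign in $\theta$ moves $\partial_t$ onto $\partial_\theta^k\phi_t$, so multiplying by $\Psi(t)$ and integrating in $t$ reproduces exactly the left-hand side of \eqref{hnc} in each summand; invoking \eqref{hnc} with $n=j$ and summing yields \eqref{eq4.4}.

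Once Proposition \ref{Prop2.6} is available the argument is essentially bookkeeping. The only points requiring a little attention are the parity identity $h_k^{(k-j)}=\pm h_j$ together with the exact power $\theta^{k-j}$ appearing in $\partial_z^{k-j}\Phi_k$ (so that every summand matches verbatim one of the integrals estimated in Proposition \ref{Prop2.6}), the verification of the admissibility condition $k\geq\max\{j,1\}$ (and the separate, trivial case $k=0$), and the routine justification of differentiating under the integral sign.
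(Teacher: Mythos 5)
Your proposal is correct and follows essentially the same route as the paper: Leibniz's rule applied to $z^{-k}\Phi_k(z,t)$, the parity identity $h_k^{(k-j)}=\pm h_j$, and then term-by-term application of Proposition \ref{Prop2.6} with $n=j$ to get the uniform bound $Cz^{-k-j}z^{j-1}=Cz^{-k-1}$. Your explicit treatment of the case $k=0$ (one integration by parts reducing $H_0$ to the $n=1$ case of Proposition \ref{Prop2.6}) is a small point of care that the paper leaves implicit, but it does not change the argument.
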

\begin{proof}
We can write
$$
\partial_z ^kH_k(z,t)=\sum_{j=0}^k\frac{c_{k,j}}{z^{k+j}}\int_0^\pi\partial_\theta^k\phi_t(\theta)\theta^{k-j}(\partial_\theta ^{k-j}h_k)(z\theta)d\theta,\;\;z,t\in(0,\infty),$$
for certain $c_{k,j}\in\mathbb R$, $j=0,...,k$.

Now we observe that, for every $r\in \mathbb{N}$,
$$
\partial_\theta ^rh_k(\theta)=(-1)^{E[(r+1)/2]}\left\{\begin{array}{ll}
	h_k(\theta),&\mbox{ if }r\mbox{ is even},\\
    (-1)^{k}h_{k+1}(\theta),&\mbox{ if }r\mbox{ is odd},
\end{array}\right.\;\;\;\theta\in(0,\pi).
$$
Then, for each $j=0,...,k$, if $k-j$ is even, that is, $k,j$ are both odd or even, $\partial _\theta ^{k-j}h_k=(-1)^{(k-j)/2}h_k=(-1)^{(k-j)/2}h_j$. In the case that $k-j$ is odd we have that
$\partial _\theta ^{k-j}h_k=(-1)^{(k-j+1)/2+k}h_{k+1}=(-1)^{(k-j+1)/2)+k}h_j$.

We deduce that 
$$
\partial_z ^kH_k(z,t)=\sum_{j=0}^k\frac{\widetilde{c}_{k,j}}{z^{k+j}}\int_0^\pi \partial _\theta ^k\phi _t(\theta) \theta ^{k-j}h_j(\theta)d\theta,\quad z,t\in (0,\infty ),
$$
for certain $\widetilde{c}_{k,j}\in \mathbb{R}$, $j=0,...,k$, and Proposition \ref{Prop2.6} allows us to conclude our result.  
\end{proof}

\section{Proof of Theorem \ref{Th1.2} for the maximal operator}\label{Section3}

We recall that the maximal operator $W_*$ associated to the semigroup $\{W_t\}_{t>0}$ is defined by
$$W_*(f)=\|W_t(f)\|_{L^\infty(0,\infty)},$$
for every $f\in\ell^r(\mathbb Z)$, $1\leq r\leq\infty$.

In \cite{CGRTV} the behavior of $W_*$ in weighted $\ell^p$-spaces is studied by considering $W_*$ as a $L^\infty$-valued singular integral and by using vectorial Calder\'on-Zygmund theory (\cite{RRT}). In \cite[Proposition 3]{CGRTV} it was proved that, for certain $C>0$,
\begin{equation}\label{eq3.1}
\|G(n,\cdot)\|_{L^\infty(0,\infty)}\leq\frac{C}{|n|+1},\quad n\in\mathbb Z,
\end{equation}
and
\begin{equation}\label{eq3.2}
\|G(n+1,\cdot)-G(n,\cdot)\|_{L^\infty(0,\infty)}\leq\frac{C}{|n|^2+1},\quad n\in\mathbb Z.
\end{equation}
By using (\ref{eq3.1}) and (\ref{eq3.2}) a standard procedure (see, for instance, \cite[p. 586]{CW}) allows to prove that, if $1/2<p\leq 1$ there exists $C>0$ such that
$$\|W_*(\mathfrak b)\|_p\leq C,$$
for every $(p,2)$-atom $\mathfrak b$. Then, if $1/2<p\leq 1$, since $\mathcal H^p(\mathbb Z)$ is continuously contained in $\ell^2(\mathbb Z)$ and $W_*$ is bounded from $\ell^2(\mathbb Z)$ into itself, $W_*$ define a bounded operator from $\mathcal H^p(\mathbb Z)$ into $\ell^p(\mathbb Z)$ (Theorem \ref{Th1.1}).

Our objective is to see that the this last property can be extended to those $p\in(0,1/2]$. In order to do this we are going to improve the estimate (\ref{eq3.2}).

For every $k\in\mathbb N$ we consider the function
$$
F_k(z,t)=\frac{(-1)^{E[(k+1)/2]}}{\pi}H_k(z,t),\quad z,t\in(0,\infty),
$$
where $H_k$ is the function defined in Proposition \ref{derivadaHk}. According to Lemma \ref{Lem2.3}, for each $k\in \mathbb{N}$, $F_k(n,t)=G(n,t)$, $n\in\mathbb N$, $n\geq 1$, and $t\in(0,\infty)$. 

Let $0<p\leq 1$ and $k=E[1/p]$. We take a $(H,p,2)$- atom $\mathfrak b$ such that its support is contained in $B_{\mathbb Z}(n_0,r_0)$ and $\|\mathfrak b\|_2\leq \mu\left(B_{\mathbb Z}(n_0,r_0)\right)^{1/2-1/p}$, for certain $n_0\in\mathbb Z$ and $r_0\geq 1$. We write
$$
\|W_*(\mathfrak b)\|^p_p=\sum_{n\in B_{\mathbb Z}(n_0,2r_0)}|W_*(\mathfrak b)(n)|^p+\sum_{n\notin B_{\mathbb Z}(n_0,2r_0)}|W_*(\mathfrak b)(n)|^p.
$$

Since $W_*$ is bounded from $\ell^2(\mathbb Z)$ into itself we get
\begin{align*}
\sum_{n\in B_{\mathbb Z}(n_0,2r_0)}|W_*(\mathfrak b)(n)|^p&\leq \left(\sum_{n\in B_{\mathbb Z}(n_0,2r_0)}|W_*(\mathfrak b)(n)|^2\right)^{p/2}\mu\left(B_{\mathbb Z}(n_0,2r_0)\right)^{1-p/2}\\
&\leq C\|\mathfrak b\|_2^p \mu\left(B_{\mathbb Z}(n_0,2r_0)\right)^{1-p/2}
\leq C.
\end{align*}
On the other hand, if $n\in\mathbb \mathbb{Z}$ and $n\geq n_0+2r_0$, we have that
\begin{align*}
W_t(\mathfrak b)(n) & =\sum_{m\in B_{\mathbb Z}(n_0,r_0)}G(n-m,t)\mathfrak b(m)=\sum_{m\in B_{\mathbb Z}(n_0,r_0)}F_k(n-m,t)b(m) \\
& = \sum_{m\in B_{\mathbb Z}(n_0,r_0)}\left(F_k(n-m,t)-\sum_{j=0}^{k-1}\frac{(\partial_z^jF_k)(n-n_0,t)}{j!}(n_0-m)^j\right)\mathfrak b(m) \\
& = \sum_{m\in B_{\mathbb Z}(n_0,r_0)}\mathfrak b(m) \int_{n-n_0}^{n-m}\int_{n-n_0}^{x_1}\cdot ...\cdot\int_{n-n_0}^{x_{k-1}}(\partial_z^kF_k)(z,t)dzdx_{k-1}...dx_1,\;\;\;t\in(0,\infty).
\end{align*}
Then from (\ref{eq3.3}) we deduce that 
\begin{align*}
|W_*(\mathfrak b)(n)|&\leq C\sum_{m\in B_{\mathbb Z}(n_0,r_0)}|\mathfrak b(m)| \left|\int_{n-n_0}^{n-m}\int_{n-n_0}^{x_1}\cdot ...\cdot\int_{n-n_0}^{x_{k-1}}\frac{1}{|z|^{k+1}}dzdx_{k-1}...dx_1\right|\\
 & \leq C\sum_{m\in B_{\mathbb Z}(n_0,r_0)}|\mathfrak b(m)| \frac{|m-n_0|^k}{|n-n_0|^{k+1}} \leq C \frac{\|\mathfrak b\|_2}{|n-n_0|^{k+1}}\left(\sum_{m\in B_{\mathbb Z}(n_0,r_0)} |m-n_0|^{2k}\right)^{1/2}\\
  & \leq C\frac{\|b\|_2}{|n-n_0|^{k+1}}r_0^{k+1/2}\leq C\frac{r_0^{k+1-1/p}}{|n-n_0|^{k+1}}, \quad n\in \mathbb{Z},\;n\geq n_0+2r_0. 
\end{align*}
If $n\in\mathbb{Z}$ and $n\leq n_0-2r_0$, since $G(\ell,t)=G(-\ell, t)$, $\ell\in\mathbb Z$ and $t\in(0,\infty)$, we get
\begin{align*}
W_t(\mathfrak b)(n) & = \sum_{m\in B_{\mathbb Z}(n_0,r_0)}G(m-n,t)\mathfrak b(m) \\
& = \sum_{m\in B_{\mathbb Z}(n_0,r_0)}\mathfrak b(m) \int_{n_0-n}^{m-n}\int_{n_0-n}^{x_1}\cdot ...\cdot\int_{n_0-n}^{x_{k-1}}(\partial_z^kF_k)(z,t)dzdx_{k-1}...dx_1,\quad t\in(0,\infty),
\end{align*}
and proceeding as before we obtain
$$
|W_*(\mathfrak b)(n)|\leq C\frac{r_0^{k+1-1/p}}{|n-n_0|^{k+1}}, \quad n\in \mathbb{Z},\;n\leq n_0-2r_0.
$$
We conclude that
$$
\|W_*(\mathfrak b)\|_p^p\leq \sum_{n\notin B_{\mathbb Z}(n_0,2r_0)}|W_*(\mathfrak b)(n)|^p\leq Cr_0^{(k+1)p-1}\sum_{n\notin B_{\mathbb Z}(n_0,2r_0)}\frac{1}{|n-n_0|^{(k+1)p}}\leq C,
$$
because $(k+1)p>1$, where $C>0$ is a constant independent of $\mathfrak{b}$. Then, it follows that the operator $W_*$ is bounded from $\mathcal H^p(\mathbb Z)$ into $\ell^p(\mathbb Z)$.

\section{Proof of Theorem \ref{Th1.2} for the Littlewood-Paley function}\label{Section4}

The vertical Littlewood-Paley function associated to the semigroup $\{W_t\}_{t>0}$ of operators defined by \eqref{funciong} can be expressed as
$$
g(f)(n)=\Big\|t\partial_tW_t(f)(n)\Big\|_{L^2((0,\infty),\frac{dt}{t})}, \quad n\in\mathbb Z,
$$
for every $f\in\ell^p(\mathbb Z)$ and $1\leq p<\infty$.

In \cite{CGRTV} it was proved that the operator $g$ can be seen as a $L^2((0,\infty),dt/t)$- Calder\'on-Zygmund singular integral of convolution type whose kernel is given by
$$\mathfrak k(n,t)=t\partial_tG(n,t),\quad n\in\mathbb Z\;\mbox{and}\;t\in(0,\infty).$$
In \cite[Proposition 4]{CGRTV} it was established that
\begin{equation}\label{eq4.1}
\|\mathfrak k(n,\cdot)\|_{L^2((0,\infty),\frac{dt}{t})}\leq\frac{C}{|n|+1},\;\;\;\;n\in\mathbb Z,
\end{equation}
and
\begin{equation}\label{eq4.2}
\|\mathfrak k(n+1,\cdot)-\mathfrak k(n,\cdot)\|_{L^2((0,\infty),\frac{dt}{t})}\leq\frac{C}{|n|^2+1},\;\;\;\;n\in\mathbb Z.
\end{equation}
A standard procedure allows us to see that $g$ is bounded from $\mathcal H^p(\mathbb Z)$ into $\ell^p(\mathbb Z,\omega)$ when $1/2<p\leq 1$ (Theorem \ref{Th1.1}). In order to prove that $g$ is bounded from $\mathcal H^p(\mathbb Z)$ into $\ell^p(\mathbb Z,\omega)$ when $0<p\leq 1/2$ we need to improve the property (\ref{eq4.2}).

As in Section \ref{Section3}, for every $(H,p,2)$- atom $\mathfrak b$ with support in $B_{\mathbb Z}(n_0,r_0)$ we write
$$
\|g(\mathfrak b)\|^p_p=\sum_{n\in B_{\mathbb Z}(n_0,2r_0)}|g(\mathfrak b)(n)|^p+\sum_{n\notin B_{\mathbb Z}(n_0,2r_0)}|g(\mathfrak b)(n)|^p.
$$
Following the argument developed there, by taking into account the $\ell ^2$-boundedness of the operator $g$ and estimation (\ref{eq4.3}), we can establish that the operator $g$ is bounded from $\mathcal H^p(\mathbb Z)$ into $\ell^p(\mathbb Z)$, for every $0<p\leq 1$,

\section{Proof of Theorem \ref{Th1.3}}

We observe from \eqref{F2} that
$$I_n(t)=\frac{1}{2\pi}\int_{-\pi}^\pi e^{t\cos\theta}e^{-in\theta}d\theta =\mathcal F_{\mathbb{Z}}^{-1}(e^{t\cos\theta})(n),\quad n\in\mathbb Z\mbox{ and }t>0.$$
Then,
\begin{equation}\label{eq5.1}
G(n,t)=\mathcal F_{\mathbb{Z}}^{-1}(e^{-2t(1-\cos \theta)})(n),\quad n\in\mathbb Z\mbox{ and }t>0.
\end{equation}

First we will establish that $T_{\mathfrak m}$ is a Calder\'on-Zygmund singular integral having $(\mathbb Z,|\cdot|,\mu)$ as homogeneous type underlying space.

Let $f\in \ell^2(\mathbb Z)$. We can write, for every $n\in\mathbb Z$,
\begin{align*}
\int_{-\pi}^\pi |\mathfrak{m}(2(1-\cos \theta))\mathcal F_{\mathbb{Z}}(f)(\theta)|d\theta &\leq 2\int_{-\pi}^\pi |\mathcal F_{\mathbb{Z}}(f)(\theta)|(1- \cos \theta)\int_0^\infty e^{-2t(1-\cos\theta)}|\Psi(t)|dtd\theta \\
& \leq C \|\Psi\|_{L^\infty(0,\infty )}\int_{-\pi}^\pi |\mathcal F_{\mathbb{Z}}(f)(\theta)|d\theta\leq C\|\Psi\|_{L^\infty (0,\infty )}\|\mathcal F_{\mathbb{Z}}(f)\|_{L^2(-\pi,\pi)}\\
&\leq C\|\Psi\|_{L^\infty(0,\infty )}\|f\|_{\ell^2(\mathbb Z)}.
\end{align*}
By using (\ref{eq5.1}) we get
\begin{align*}
\int_{-\pi}^\pi \mathfrak{m}(2(1-\cos \theta))\mathcal F_{\mathbb{Z}}(f)(\theta)e^{-in\theta}d\theta&=2\int_0^\infty\Psi(t)\int_{-\pi}^\pi \mathcal F_{\mathbb{Z}}(f)(\theta)(1-\cos \theta)e^{-2t(1-\cos \theta )}e^{-in\theta}d\theta dt\\
& = -\int_0^\infty\Psi(t)\partial_t\int_{-\pi}^\pi \mathcal F_{\mathbb{Z}}(f)(\theta)e^{-2t(1-\cos \theta)}e^{-in\theta}d\theta dt \\
& = -2\pi\int_0^\infty\Psi(t)\partial_t\mathcal F_{\mathbb{Z}}^{-1}[\mathcal F_{\mathbb{Z}}(f)\mathcal F_{\mathbb{Z}}(G(\cdot,t))](n) dt \\
& =-2\pi\int_0^\infty\Psi(t)\partial_t\sum_{m\in\mathbb Z}G(n-m,t)f(m)dt,\quad n\in\mathbb Z.
\end{align*}
Hence, if $f(m)=0$, for every $m\in \mathbb{Z}$ with $|m|>m_0$, for certain $m_0\in\mathbb N$, we get 
$$T_{\mathfrak{m}}(f)(n)=\sum_{\ell\in\mathbb Z}K_{\mathfrak m}(n-\ell)f(\ell),\;\;\;n\in\mathbb Z,$$
where $\displaystyle K_{\mathfrak m}(n)=-\int_0^\infty \Psi(t)\partial_tG(n,t)dt$, $n\in\mathbb Z$.

According to Lemma \ref{Lem2.3} we have that
$$
K_{\mathfrak{m}}(n)=-\frac{1}{\pi}\int_0^\infty \Psi (t)\partial _tH_0(n,t)dt,\quad n\in \mathbb{Z},
$$
where $H_0$ is the function given in Proposition \ref{derivadaHk} for $k=0$. Then, from \eqref{eq4.4} for $k=0$ we see that
\begin{equation}\label{eq5.2}
|K_{\mathfrak m}(n)|\leq\frac{C}{|n|},\quad n\in\mathbb Z,\;n\neq 0.
\end{equation}
Also we we can write
$$
K_{\mathfrak m}(n+\ell)-K_{\mathfrak m}(n)=\int_n^{n+\ell}\partial_zJ(z)dz,$$
where $\displaystyle J(z)=\frac{1}{\pi}\int_0^\infty \Psi(t)\partial_tH_1(z,t) dt$, $z>0$. Here $H_1$ is the function in Proposition \ref{derivadaHk} for $k=1$.
Again by \eqref{eq4.4} (for $k=1$) we deduce that
\begin{equation}\label{eq5.3}
|K_{\mathfrak m}(n+\ell)-K_{\mathfrak m}(n)|\leq C\left|\int_{n}^{n+\ell}\frac{dz}{z^2}\right|\leq C\frac{|\ell|}{n^2},\quad n,\ell\in\mathbb Z,\;n\cdot\ell> 0.
\end{equation}

We have seen that $T_{\mathfrak m}$ is a Calder\'on-Zygmund singular integral operator. Hence, $T_{\mathfrak m}$ can be extended from $\ell^2(\mathbb Z)\cap\ell^p(\mathbb Z,\omega)$ to $\ell^p(\mathbb Z,\omega)$ as a bounded operator from $\ell^p(\mathbb Z,\omega)$ into
\begin{enumerate}
\item[(a)] $\ell^p(\mathbb Z,\omega)$, for every $1<p<\infty$ and $\omega\in A_p(\mathbb Z)$,
\item[(b)] $\ell^{1,\infty}(\mathbb Z,\omega)$, when $p=1$ and $\omega\in A_1(\mathbb Z)$.
\end{enumerate}
Also, it can be shown that $T_{\mathfrak m}$ defines a bounded operator from $\mathcal H^p(\mathbb Z)$ to $\ell^p(\mathbb Z)$, for every $1/2<p\leq 1$, by using \eqref{eq5.3} and for $0<p\leq 1/2$ by means of \eqref{eq4.4} (see the argument in the proof of Theorem \ref{Th1.2} for $W_*$ and $g$).

Next we are going to see that $T_{\mathfrak m}$ defines a bounded operator from $\mathcal H^p(\mathbb Z)$ into itself, $0<p\leq 1$. In order to do this we will use molecules.

Discrete molecules were considered in \cite{KS} (see also \cite{Ko}). Let $0<p\leq 1<q\leq\infty$, and $\alpha >1/p-1/q$. A complex sequence $M$ is said to be a $(H,p,q,\alpha )$-molecule centered in $n_0\in\mathbb Z$ when the following conditions are satisfied.
\begin{enumerate}
\item[(i)] $N_{p,q,\alpha }(M)=\|M\|_q^{1-\theta}\||\cdot -n_0|^\alpha M\|_q^\theta <\infty,\;\;\;\mbox{where}\;\theta=(1/p-1/q)/\alpha ,$
\item[(ii)] $\displaystyle\sum_{n\in\mathbb Z}n^jM(n)=0$, $j\in\mathbb N$, $j\leq E[1/p]-1$.
\end{enumerate}
Note that the absolute convergence of the series in (ii) follows from the condition (i). In \cite[Theorem 2]{KS} Hardy spaces $\mathcal H^p(\mathbb Z)$ were characterized by using molecules.

Let $0<p\leq 1$ and $k=E[1/p]$. To establish that $T_{\mathfrak m}$ defines a bounded operator from $\mathcal H^p(\mathbb Z)$ into itself we see that there exists $C>0$ such that if $\mathfrak b$ is a $(H,p,2)$-atom associated to $n_0\in\mathbb Z$ and $r_0\geq 1$, then $T_{\mathfrak m}(\mathfrak b)$ is a $(H,p,2,k)$-molecule centered in $n_0$ and $N_{p,2,k}(T_{\mathfrak m}(\mathfrak b))\leq C$.

Assume that $\mathfrak b$ is a $(H,p,2)$-atom such that the support of $\mathfrak b$ is contained in $B_\mathbb{Z}(n_0,r_0)$ and $\|\mathfrak b\|_2\leq \mu(B_\mathbb{Z}(n_0,r_0))^{1/2-1/p}$, where $n_0\in\mathbb Z$ and $r_0\geq 1$. It is sufficient to consider $n_0=0$ because $T_{\mathfrak m}$ is a convolution operator.

Since $T_{\mathfrak m}$ is a bounded operator from $\ell^2(\mathbb Z)$ into itself we have that
$$
\|T_{\mathfrak m}(\mathfrak b)\|_2\leq C\|\mathfrak b\|_2\leq C\mu(B_\mathbb{Z}(0,r_0))^{1/2-1/p}.
$$

On the other hand we can write
$$
\||\cdot|^kT_{\mathfrak m}(\mathfrak b)\|_2^2 =\left(\sum_{n\in B_\mathbb{Z}(0,2r_0)}+\sum_{n\not\in B_\mathbb Z(0,2r_0)}\right)|n|^{2k}|T_{\mathfrak m}(\mathfrak b)(n)|^2=J_1+J_2.
$$

We have that
$$
J_1\leq Cr_0^{2k}\|\mathfrak b\|_2^2\leq Cr_0^{2k}\mu(B_\mathbb{Z}(0,r_0))^{1-2/p}\leq C\mu(B_\mathbb{Z} (0,r_0))^{2k+1-2/p}.
$$

To estimate $J_2$ we proceed as in the proof of Theorem \ref{Th1.2} for $W_*$ and $g$ by using \eqref{eq4.4}. We obtain 
\begin{align*}
\sum_{n\not \in B_\mathbb Z(0,2r_0)}|n|^{2k}|T_{\mathfrak m}(\mathfrak b)(n)|^2&\leq C\sum_{n\not \in B_\mathbb Z(0,2r_0)}|n|^{2k}\left(\sum_{m\in B_\mathbb{Z}(0,r_0)}|\mathfrak b(m)|\frac{|m|^k}{|n|^{k+1}}\right)^2 \\
& \leq C\sum_{n\not \in B_\mathbb Z(0,2r_0)}\frac{1}{|n|^2}\|\mathfrak b\|_2\sum_{m\in B_\mathbb{Z}(0,r_0)}|m|^{2k} \\
& \leq \frac{C}{r_0}\mu(B_\mathbb{Z}(0,r_0))^{1-2/p}r_0^{2k+1}\leq C\mu(B_\mathbb{Z}(0,r_0))^{2k+1-2/p}.
\end{align*}
By combining the above estimates we get
$$\||\cdot|^kT_{\mathfrak m}(\mathfrak b)\|_2\leq C\mu(B_\mathbb{Z}(0,r_0))^{k+1/2-1/p}.$$
We conclude that
$$
N_{p,2,k}(T_{\mathfrak m}(\mathfrak b))\leq C\mu(B_\mathbb{Z}(0,r_0))^{(1/2-1/p)(1-\theta )+(k+1/2-1/p)\theta}=C,
$$
where $\theta =(1/p-1/2)/k$.

Next we prove that
\begin{equation}\label{momentos}
\sum_{n\in \mathbb{Z}}n^jT_\mathfrak{m}\mathfrak{b}(n)=0,\quad j=0,...,k-1.
\end{equation}

According to \cite[p. 303]{KS}, $\sum_{n\in\mathbb Z}|n|^j|T_{\mathfrak m}(\mathfrak b)(n)|<\infty$, for every, $j\in\mathbb N$, $j\leq k-1$. Since $T_{\mathfrak m}(\mathfrak b)\in\ell^1(\mathbb Z)$, $\mathcal F_\mathbb{Z}(T_{\mathfrak m}(\mathfrak b))$ is a continuous function in $(-\pi,\pi)$. Also, $T_{\mathfrak m}(\mathfrak b)\in\ell^2(\mathbb Z)$ and  $\mathcal F_\mathbb{Z}(T_{\mathfrak m}(\mathfrak b))(\theta)=\mathfrak m(2(1-\cos \theta))\mathcal F_\mathbb{Z}(\mathfrak b)(\theta)$, a.e. $\theta\in(-\pi,\pi)$. Since $\mathfrak m(2(1-\cos \theta))\mathcal F_\mathbb{Z}(\mathfrak b)(\theta)$ is a continuous function in $(-\pi,\pi)\setminus\{0\}$, we have that
$$
\mathcal F_\mathbb{Z}(T_{\mathfrak m}(\mathfrak b))(\theta)=\mathfrak m(2(1-\cos \theta))\mathcal F_\mathbb{Z}(\mathfrak b)(\theta),\quad \theta\in(-\pi,\pi)\setminus\{0\}.
$$
Then, by taking into account that $\mathfrak m$ is a bounded function in $(0,\infty)$ and that $\mathcal F_\mathbb{Z}(\mathfrak b)(0)=\sum_{n\in\mathbb Z}\mathfrak b(n)=0$, we get
$$
\lim_{\theta\rightarrow 0}\mathcal F_\mathbb{Z}(T_{\mathfrak m}(\mathfrak b))(\theta)=0,
$$
and then we get \eqref{momentos} when $j=0$, because $\lim_{\theta\rightarrow 0}\mathcal F_\mathbb{Z}(T_{\mathfrak m}(\mathfrak b))(\theta)=\sum_{n\in \mathbb{Z}}T_\mathfrak{m}(\mathfrak{b})(n)$.

Let now $j\in\mathbb N$, $1\leq j\leq k-1$.

Since $\sum_{n\in\mathbb Z}|T_{\mathfrak m}(\mathfrak b)(n)||n|^j<\infty$, it follows that $\mathcal F_\mathbb{Z}(T_{\mathfrak m}(\mathfrak b))\in C^j(-\pi,\pi)$ with
$$
\frac{d^j}{d\theta^j}\mathcal F_\mathbb{Z}(T_{\mathfrak m}(\mathfrak b))(\theta)=i^j\sum_{n\in \mathbb Z}n^j T_{\mathfrak m}(\mathfrak b)(n)e^{in\theta}, \quad \theta\in(-\pi,\pi),
$$
and 
$$
\sum_{n\in \mathbb{Z}}n^jT_\mathfrak{m}(\mathfrak{b})(n)=(-i)^j\lim_{\theta \rightarrow 0}\frac{d^j}{d\theta^j}\mathcal F_\mathbb{Z}(T_{\mathfrak m}(\mathfrak b))(\theta).
$$
On the other hand, we have that
$$
\frac{d^j}{d\theta^j}\mathcal F_\mathbb{Z}(T_{\mathfrak m}(\mathfrak b))(\theta)=\sum_{\ell=0}^j{{j}\choose{\ell}}\frac{d^{j-\ell}}{d\theta^{j-\ell}}\mathfrak m(2(1-\cos \theta))\frac{d^\ell}{d\theta^\ell}\mathcal F_\mathbb{Z}(\mathfrak b)(\theta),\quad \theta\in(-\pi,\pi)\setminus\{0\}.
$$
It is not hard to see that, for every $r\in\mathbb N$, there exists $C>0$ such that
$$
\left|\frac{d^r}{d\theta^r}\mathfrak m(2(1-\cos \theta))\right|\leq \frac{C}{|\theta|^r},\;\;\;\theta\in(-\pi,\pi)\setminus\{0\}.
$$
By using Taylor theorem, since $\frac{d^\ell}{d\theta^\ell}\mathcal F_\mathbb{Z}(\mathfrak b)(0)=0$, $\ell\in\mathbb N$, $0\leq\ell\leq k-1$, we get, for a certain $C>0$,
$$\left|\frac{d^\ell}{d\theta^\ell}\mathcal F_\mathbb{Z}(\mathfrak b)(\theta)\right|\leq C|\theta|^{k-\ell},\;\;\;\theta\in(-\pi,\pi)\setminus\{0\}\;\mbox{and}\;\ell =0,1,...,k-1.$$

We conclude that $\lim_{\theta\rightarrow 0}\frac{d^j}{d\theta^j}\mathcal F_\mathbb{Z}(T_{\mathfrak m}(\mathfrak b))(\theta)=0$ and hence, $\sum_{n\in\mathbb Z}n^j T_{\mathfrak m}(\mathfrak b)(n)=0.$
Thus, we have proved that the operator $T_{\mathfrak m}$ is bounded from $\mathcal H^p(\mathbb Z)$ into itself for every $p\in(0,1]$.

\def\cprime{$'$} \def\ocirc#1{\ifmmode\setbox0=\hbox{$#1$}\dimen0=\ht0
  \advance\dimen0 by1pt\rlap{\hbox to\wd0{\hss\raise\dimen0
  \hbox{\hskip.2em$\scriptscriptstyle\circ$}\hss}}#1\else {\accent"17 #1}\fi}


\end{document}